\newtheorem{theorem}{Theorem}[section]
\newtheorem{lemma}[theorem]{Lemma}
\newtheorem{remark}[theorem]{Remark}
\newtheorem{proposition}[theorem]{Proposition}
\newtheorem{definition}[theorem]{Definition}
\numberwithin{equation}{section}
\begin{document}
\title{Free boundary minimal surfaces of unbounded genus}
\author{Daniel Ketover}\address{Department of Mathematics\\Princeton
University\\Princeton, NJ 08544}
\thanks{The author was partially supported by NSF-PRF DMS-1401996 as well as ERC-2011-StG-278940}
 \email{dketover@math.princeton.edu}
\maketitle
\begin{abstract}
For each integer $g\geq 1$ we use variational methods to construct in the unit $3$-ball $B$ a free boundary minimal surface $\Sigma_g$ of symmetry group $\mathbb{D}_{g+1}$.  For $g$ large, $\Sigma_g$ has three boundary components and genus $g$.  As $g\rightarrow\infty$ the surfaces $\Sigma_g$ converge as varifolds to the union of the disk and critical catenoid.  These examples are the first with genus greater than $1$ and were conjectured to exist by Fraser-Schoen.  We also construct several new free boundary minimal surfaces in $B$ with the symmetry groups of the cube, tetrahedron and dodecahedron.  Finally, we prove that free boundary minimal surfaces isotopic to those of Fraser-Schoen can be constructed variationally using an equivariant min-max procedure.  We also prove an $\epsilon$-regularity theorem for free boundary minimal surfaces in $B$. 
\end{abstract}
\section{Introduction}
Denote by $U$ an open domain in $\mathbb{R}^{n+1}$.  A hypersurface $\Sigma^{n}\subset U$ with $\partial\Sigma\subset\partial U$ is called a \emph{free boundary minimal surface} if it is minimal in $U$ and
\begin{equation}
\Sigma\perp\partial U.
\end{equation}
Free boundary minimal surfaces arise variationally as critical points to the volume functional for hypersurfaces with boundary in $\partial U$ where one permits variations that move $\partial\Sigma$ within $\partial U$.  They have been studied already since the 1940s by Courant \cite{C}.  Recently Fraser-Schoen \cite{FS} have found some connections between free boundary minimal surfaces and extremal metrics for Steklov eigenvalues.   

As for existence theory, Gr\"uter-Jost (\cite{GJ},\cite{GJ2}) used the min-max method pioneered by Almgren-Pitts \cite{pitts} and Simon-Smith \cite{ss} to produce a free boundary minimal disk in three-dimensional convex bodies.  Dropping the convexity assumption, Li  \cite{li} recently obtained a related existence result.  Applying White's degree theory \cite{white}, Maximo-Nunes-Smith \cite{mns} showed the existence of a free boundary minimal annulus in convex bodies.  De Lellis-Ramic \cite{DR} have produced free boundary surfaces in higher dimensions (see also \cite{lz}).  See \cite{FGM} for explicit higher dimensional examples.

In this paper, we will be concerned with free boundary minimal surfaces in the standard three-ball $B$ in $\mathbb{R}^3$.  Very few explicit examples are known.  The simplest examples are the the flat disks through the origin.  Secondly, there is the ``critical catenoid" which is a free boundary annulus obtained by rescaling the catenoid in $\mathbb{R}^3$ so that it intersects the boundary of the unit ball orthogonally.  Throughout this paper, $D$ will denote the unique free boundary minimal disk in the $xy$ plane, and $C$ the unique critical catenoid that is rotationally symmetric about the $z$ axis.

For each $k\geq 2$, Fraser-Schoen \cite{FS} constructed a free boundary minimal surface $F_k$ resembling a ``doubling" of the flat disk $D$ in the sense that $F_k\rightarrow 2D$ as $k\rightarrow\infty$.  The surfaces $F_k$ consist of two disks joined by many half-necks at the boundary $\partial B$ and so have genus $0$ and $k$ ends.  Later Folha-Pacard-Zolotareva \cite{pacard} using gluing methods gave another construction of $F_k$ when $k$ is large.  They also constructed related genus $1$ examples $G_k$ (when $k$ is large) by adding a catenoidal neck at the center of the $F_k$ examples joining the two layers.  

It has been an open question whether one can construct higher genus examples.
Fraser-Schoen conjectured (Section 1 in \cite{FS2}) that there should be a sequence of free boundary minimal surfaces with genus approaching infinity converging to the union of the critical catenoid and the disk.  In this paper, we confirm their conjecture and prove: 
\begin{theorem}\label{main}
For each integer $g\geq 1$, there exists a free boundary minimal surface $\Sigma_g$ in the unit $3$-ball $B$ with dihedral symmetry $\mathbb{D}_{g+1}$ that is not the flat disk $D$.  When $g$ is large, $\Sigma_g$ has three boundary components and genus $g$.  Moreover, 
\begin{equation}\label{limit}
\Sigma_g\rightarrow D\cup C \text{ in the varifold sense as } g\rightarrow\infty.
\end{equation}
Furthermore, $|\Sigma_g|< |D|+|C|$ for all $g\geq 1$ (where $|\Sigma|$ denotes the $2$-dimensional Hausdorff measure of $\Sigma$).
\end{theorem}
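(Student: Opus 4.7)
My plan is to run equivariant min-max for the dihedral group $\mathbb{D}_{g+1}$, where I take the $(g+1)$-fold rotation axis to be the $z$-axis of the critical catenoid $C$ and I impose a horizontal reflection symmetry preserving the disk $D$. For each $g\geq 1$ I would construct a $1$-parameter sweepout $\{\Sigma_t^g\}_{t\in[0,1]}$ of $\mathbb{D}_{g+1}$-invariant surfaces in $B$ whose middle slice is a smoothing of $D\cup C$ obtained by attaching $g+1$ thin catenoidal necks at the symmetric points where $D$ and $C$ come close, yielding a surface of genus $g$ with three boundary components; the end slices are topologically trivial (a small disk and a small circle near the axis). The quantitative heart of the construction is a \emph{catenoid estimate} in the spirit of Ketover--Marques--Neves: if the necks are sufficiently thin, then $\sup_{t}|\Sigma_t^g|<|D|+|C|$, because opening/closing a slim catenoidal neck costs only a higher-order amount of area, while the rest of the sweepout can be arranged to save a definite amount relative to $D\cup C$. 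I expect this to be the main technical obstacle.

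Feeding the $\mathbb{D}_{g+1}$-saturation of this sweepout into the equivariant free boundary min-max theorem developed elsewhere in the paper (and making use of the stated $\epsilon$-regularity result) produces a smooth, embedded, free boundary minimal surface $\Sigma_g\subset B$ invariant under $\mathbb{D}_{g+1}$ with $|\Sigma_g|\leq\sup_t|\Sigma_t^g|<|D|+|C|$. To exclude $\Sigma_g=D$, I would argue that the equivariant width strictly exceeds $|D|$: by a Lusternik--Schnirelmann argument, the constructed sweepout is non-contractible among $\mathbb{D}_{g+1}$-equivariant sweepouts rel its endpoints, so any competitor has at least one slice of area $>|D|$. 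Standard min-max genus bounds \`a la De Lellis--Pellandini, adapted to the free boundary setting, then give $\operatorname{genus}(\Sigma_g)\leq g$; the matching lower bound for $g$ large comes from combining the $\mathbb{D}_{g+1}$-action with the Riemann--Hurwitz formula, together with the area bound $|\Sigma_g|<|D|+|C|$, which forces the quotient of $\Sigma_g$ to be of bounded complexity and pins down the number of boundary components as three.

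For the varifold convergence, by construction I can take the necks thinner as $g\to\infty$ so that $\sup_t|\Sigma_t^g|\to|D|+|C|$, hence $|\Sigma_g|\to|D|+|C|$ as well. Any subsequential varifold limit $V$ of the $\Sigma_g$ is then a stationary integral varifold with free boundary in $B$, of mass $|D|+|C|$, and invariant under all of the $\mathbb{D}_{g+1}$; this forces $V$ to be $O(2)$-symmetric about the $z$-axis and symmetric across the horizontal plane. A classification of $O(2)$-invariant free boundary stationary integral varifolds in $B$ of mass exactly $|D|+|C|$ leaves only $V=D\cup C$, which gives \eqref{limit}. The strict inequality $|\Sigma_g|<|D|+|C|$ for every $g$ is inherited directly from the strict inequality in the sweepout. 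Beyond the catenoid estimate itself, the most delicate ancillary point is ruling out degenerations of the min-max sequence to higher-multiplicity covers of $D$ (or of the plane $\{z=0\}\cap B$ plus a catenoidal piece), which is precisely what the strict subadditivity of the width and the non-triviality of the equivariant class are designed to prevent.
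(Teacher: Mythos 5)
Your proposal captures the broad strategy (equivariant min-max on a sweepout that interpolates through a desingularization of $D\cup C$, feeding into the equivariant free boundary min-max theorem), but it diverges from the paper's proof in ways that leave genuine gaps, and in one place it is based on a misconception.

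First, the desingularization. You propose to produce the middle slice by ``attaching $g+1$ thin catenoidal necks at the symmetric points where $D$ and $C$ come close.'' But $D$ and $C$ do not merely come close---they intersect transversally along a circle---so attaching necks at isolated points does not produce a smooth surface. The paper instead performs a Scherk-type desingularization along the entire intersection circle $C_t\cap D$: the $g+1$ lines $L_i$ of $\mathbb{Z}_2$-isotropy cut that circle into $2(g+1)$ arcs, and along alternating arcs one pairs the four local sheets of $C_t\cup D$ in two opposite ways. A consequence is that the sweepout surfaces \emph{contain} the lines $L_i$ (segments of the singular set), which is crucial and is the source of the paper's main new technical difficulty: near an axis contained in the surface, $\mathbb{Z}_2$-stability does not obviously imply stability, so Schoen's curvature estimates cannot be invoked directly and must be replaced by a Choi--Schoen-type compactness argument using the genus bound. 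Your proposal does not engage with this. Also, because the Scherk-type desingularization is literally a ``rounding of corners,'' the strict area bound $\sup_t|\tilde\Sigma_t|<|D|+|C|$ is immediate and no catenoid estimate is needed---that estimate is used in this paper only for the Fraser--Schoen examples (Theorem \ref{fs}), not for Theorem \ref{main}.

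Second, and most importantly, the non-triviality of the width. You wave at ``a Lusternik--Schnirelmann argument'' for non-contractibility, but this does not identify a mechanism, and the paper explicitly flags this step as the subtle one. The actual argument: because the sweepout surfaces contain the lines $L_i$, rotation by $180^\circ$ about any $L_i$ preserves each slice while swapping the two complementary regions, so every slice of every competitor in the saturation divides $B$ into two components of equal volume. Moreover, the sweepout \emph{interchanges} the northern and southern hemispheres between $t=0$ and $t=1$. If the width were $\pi$, then by Almgren's sharp isoperimetric theorem for the ball every slice of an almost-optimal sweepout would have to be $\mathbb{F}$-close to the equivariant minimizer, namely $D$ itself, and one shows (via a coarea/filigree argument) that such a sweepout cannot interchange the hemispheres. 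This is a concrete topological obstruction tied to the equal-volume-splitting property, not a generic topological non-contractibility statement, and your proposal gives no substitute for it.

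Third, the varifold convergence. You argue that choosing thinner necks makes $\sup_t|\Sigma_t^g|\to|D|+|C|$ and conclude $|\Sigma_g|\to|D|+|C|$. This does not follow: the width $W_g$ is an infimum over the whole saturation, so the supremum of one initial sweepout tending to $|D|+|C|$ gives only the upper bound $W_g<|D|+|C|$; it says nothing about a lower bound for $W_g$. The paper instead analyzes the blowup set of $|A|_{\Sigma_g}^2$, shows via Proposition \ref{rigidity} (the $\epsilon$-regularity/rigidity of the disk) that $\Sigma_g$ cannot converge to $D$ or $kD$, uses Ilmanen's integrated Gauss--Bonnet argument to show all the genus collapses onto a single circle $S\subset D$ of positive radius, and then identifies the limit as $D\cup C$ via the constancy theorem and the uniqueness of the rotationally symmetric free boundary annulus. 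The claim about three boundary components for large $g$ then follows from this convergence, rather than from a Riemann--Hurwitz count (which, by itself, does not pin down the number of boundary circles without first knowing the varifold limit).

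Finally, your proposed sweepout has ``end slices'' that are ``topologically trivial (a small disk and a small circle near the axis).'' In the paper, $\tilde\Sigma_0=D\cup(\{z\text{-axis}\}\cap B)$ and $\tilde\Sigma_1=\partial D$, both of area at most $\pi$, and the slices are not closed surfaces of small area; the requirement is that the endpoint areas be strictly below the width, not that the endpoints be near-empty. This is consistent with, but not captured by, your description.
\qedhere
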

The proof of Theorem \ref{main} was inspired by a sketch of Pitts-Rubinstein \cite{PR} for a variational construction of the minimal surfaces of Costa-Hoffman-Meeks (see also \cite{HM}).  The minimal surfaces obtained by Theorem \ref{main} can be interpreted as free boundary analogs to these surfaces.  Surprisingly, in the proof of Theorem \ref{main} we need the sharp isoperimetric inequality for the ball $B$ to prove that the sweepouts we consider are nontrivial.

When $g$ is small, it is possible that the surface $\Sigma_g$ produced by Theorem \ref{main} has one boundary component in $\partial B$ rather than three.   One could rule this out if one could show that the only free boundary minimal surface in $B$ with one boundary component is a flat disk.   This would imply the existence of a free boundary minimal surface of each genus $g$.

We also apply the catenoid estimate \cite{KMN} to show:
\begin{theorem}\label{fs}
For each $k\geq 2$, a free boundary minimal surface isotopic to $F_k$ can be constructed variationally through a one-parameter equivariant min-max procedure.  
\end{theorem}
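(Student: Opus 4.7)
The plan is to exhibit a $G$-equivariant one-parameter sweepout of $B$ whose generic slices are modelled on $F_k$, apply the equivariant free boundary min-max framework developed earlier in the paper, and use the catenoid estimate of \cite{KMN} to force the min-max width strictly below $2|D|$, thereby excluding degeneration to the double flat disk.

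Let $G=\mathbb{D}_k\times\mathbb{Z}_2\subset O(3)$ denote the symmetry group of $F_k$, generated by $2\pi/k$-rotations about the $z$-axis, the $k$ meridional reflections these rotations fix, and the equatorial reflection $(x,y,z)\mapsto(x,y,-z)$. The first step is to build a continuous $G$-equivariant one-parameter sweepout $\{\Sigma_t\}_{t\in[-1,1]}$ of $B$ in which $\Sigma_{\pm 1}$ have zero mass, $\Sigma_0$ is equivariantly isotopic to $F_k$ (two nearly parallel disks near the equator joined by $k$ half-catenoidal necks placed symmetrically along $\partial B$), and
\begin{equation}
\sup_{t\in[-1,1]}|\Sigma_t|<2|D|.
\end{equation}
The strict inequality is precisely the content of the catenoid estimate applied to a $k$-fold symmetric doubling: the opening of the $k$ symmetric half-necks can be balanced against the perturbation of the two sheets in such a way that the supremal area along the sweepout stays below the double cover bound. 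For $t$ close to $\pm 1$ the two sheets collapse toward the two poles and the sweepout terminates with zero mass.

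The second step is to run the equivariant free boundary min-max procedure (the same one used in the proof of Theorem \ref{main}) on the $G$-saturation of this sweepout, producing a smooth $G$-invariant free boundary minimal surface $\Sigma_\infty\subset B$ with $|\Sigma_\infty|\leq \sup_t|\Sigma_t|<2|D|$. The sweepout represents a nontrivial class in the equivariant space of relative cycles (the same class as the standard one-parameter sweepout of $B$ by horizontal disks), so the width $W$ is strictly positive and $\Sigma_\infty$ is nonempty. The strict bound $W<2|D|$ excludes the multiplicity-two disk, and a genus and boundary-component control for one-parameter min-max, adapted to the free boundary setting and refined by the $G$-symmetry, then forces $\Sigma_\infty$ to be connected, of genus zero, with exactly $k$ boundary components. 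Since both $F_k$ and $\Sigma_\infty$ are $G$-invariant free boundary minimal surfaces of this topological type and separate $B$ in the same $G$-symmetric way, a standard topological argument identifies them up to equivariant isotopy.

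The main obstacle is the construction and sharpness of the sweepout: the catenoid estimate must be applied to $k$ simultaneously opening necks in a globally $G$-equivariant family that also terminates trivially at $t=\pm 1$, and one must verify it does not destroy the strict inequality when carried out symmetrically in $k$ places at once. A secondary technical point is ruling out unwanted min-max limits for small $k$, where the strict area bound alone does not preclude configurations such as the critical catenoid $C$ or a single flat disk $D$; there the equivariant topological control and the homotopy class represented by the sweepout must be invoked together.
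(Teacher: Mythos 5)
Your overall strategy matches the paper's: build a $G$-equivariant one-parameter sweepout whose middle slices look like $F_k$, invoke the catenoid estimate of \cite{KMN} to keep $\sup_t|\Sigma_t|<2|D|$, feed this into the equivariant free boundary min-max machinery, and use the strict area bound to exclude the multiplicity-two disk. The paper's sweepout is concretely two horizontal disks $D_t\cup D_{-t}$ with $k$ half-tubes glued along longitude arcs of $\partial B$, degenerating to a $1$-dimensional graph at each endpoint, which is the same picture you describe in different words.

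There are, however, two points where your proposal departs from the paper in ways that actually matter. First, your choice of symmetry group $G=\mathbb{D}_k\times\mathbb{Z}_2$, generated by meridional reflections and the equatorial reflection, contains orientation-reversing elements whose fixed-point sets are two-dimensional (planes through the origin). The equivariant min-max theorem proved in this paper (and in \cite{ketover2}) is formulated only for $\mathbb{Z}_n$, $\mathbb{D}_n$ and the three Platonic rotation groups, all orientation-preserving, precisely because the regularity and replacement theory is built around a singular set consisting of line segments. With reflections present, the fixed set is a union of disks, the transversality conventions and the surgery/replacement analysis in Theorem \ref{eqminmax} would need to be redone from scratch, and nothing in the paper supplies this. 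The fix is easy: drop the reflections and use $G=\mathbb{D}_k$, which is what the paper does and under which the same sweepout is still equivariant.

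Second, your endgame is vaguer than it should be. You appeal to ``genus and boundary-component control \dots refined by the $G$-symmetry'' to force genus zero with $k$ ends, and to ``the homotopy class represented by the sweepout'' to rule out alternatives like $C$ or a single $D$ at small $k$. The Simon--Smith genus bound used here (Theorem \ref{eqminmax}ii) controls genus, not the number of boundary components, so it does not by itself pin down the topological type. The paper's argument is more elementary and more decisive: by the structure of the admissible $G$-equivariant surgeries, the only way the genus-zero, $k$-ended sweepout slice can degenerate is by simultaneously performing all $k$ boundary half-neckpinches, producing two disjoint flat disks, i.e.\ $\Gamma=D$ with $n=2$ and $W=2|D|$, which contradicts the strict bound. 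Hence no surgery occurs at all, and the limit surface is isotopic to the model slice, which is $F_k$. You should replace the appeal to a relative-cycle homotopy class, which belongs to a different (Almgren--Pitts) framework than the smooth sweepout theory used here, by this direct surgery analysis.
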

Finally we produce several new genus zero examples associated with the Platonic solids: 

\begin{theorem}\label{platonic}
There exists a free boundary minimal surface in $B$  with octahedral symmetry of genus $0$ and $6$ ends, an example with tetrahedral symmetry of genus $0$ and $4$ ends, and an example of genus $0$ and $12$ ends of dodocahedral symmetry.
\end{theorem}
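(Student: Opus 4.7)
The plan is to apply the equivariant min-max framework developed earlier in the paper (as used for Theorem \ref{fs}) with the symmetry group $G$ taken to be the rotational symmetry group of the relevant Platonic solid: the octahedral (cubic) group of order $24$ for the $6$-ended example, the tetrahedral group of order $12$ for the $4$-ended example, and the icosahedral (dodecahedral) group of order $60$ for the $12$-ended example. These groups act on $\partial B$ with distinguished orbits of size $6$, $4$, and $12$ (the face-centers of the cube, tetrahedron, and dodecahedron respectively), and I would build a $G$-invariant one-parameter sweepout $\{\Sigma_t\}_{t\in[0,1]}$ of $B$ whose generic slice consists of two nearly flat disks placed just above and below the equatorial disk $D$, joined by a $G$-orbit of small half-catenoidal necks meeting $\partial B$ at the distinguished orbit. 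One endpoint of the sweepout degenerates to the double disk $2D$ and the other sweeps $B$ out to the empty surface.

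To ensure nontriviality, I would invoke the catenoid estimate of \cite{KMN} in the equivariant setting to obtain the strict inequality $W_G<2|D|$ for the equivariant width. This prevents the min-max limit from being the flat disk $D$ counted with multiplicity two, which is otherwise the obvious competitor. A matching lower bound $W_G>|D|$ follows because the sweepout represents a nontrivial element in the space of $G$-equivariant cycles relative to $\partial B$, so a single flat disk cannot realize it. The equivariant regularity theorem proved earlier in the paper then produces a smooth, embedded, $G$-invariant free boundary minimal surface $\Sigma\subset B$ with $|\Sigma|=W_G\in(|D|,2|D|)$, so in particular $\Sigma$ is not a flat disk.

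The remaining step is to read off the topology. Since every slice of the sweepout has genus $0$, the Simon--Smith genus bound, in its equivariant form, forces $\Sigma$ to have genus $0$. The boundary $\partial\Sigma\subset\partial B$ is $G$-invariant and consists of a union of $G$-orbits of circles; the strict area bound forces multiplicity one, and the initial topological data of the sweepout singles out exactly one orbit, yielding $6$, $4$, or $12$ boundary components respectively. The main obstacle I anticipate is the usual one in this program: controlling degenerations of the sweepout that could produce a minimal surface with strictly fewer boundary components (such as an entire $G$-orbit of necks collapsing simultaneously, which is permitted by the symmetry). Ruling this out will require a careful choice of sweepout together with sharp width bounds in the spirit of the isoperimetric argument used in the proof of Theorem \ref{main}.
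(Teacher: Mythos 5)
Your proposed sweepout is not $G$-invariant for any of the three Platonic groups, which is a fatal flaw. The groups $O_{24}$, $T_{12}$, and $I_{60}$ do not preserve the equatorial disk $D$ (equivalently, they do not fix the $z$-axis): for instance, the $120^\circ$ rotation of $O_{24}$ about the diagonal axis through $(1,1,1)$ permutes the coordinate axes cyclically and sends the plane $\{z=0\}$ to $\{x=0\}$. Thus a slice consisting of two nearly parallel disks close to $D$, joined by necks, cannot be $O_{24}$-equivariant, and the same objection applies to the tetrahedral and icosahedral groups. Your construction is really the $\mathbb{D}_k$-equivariant sweepout used for the Fraser--Schoen examples in Theorem~\ref{fs}, and it does not transfer to groups that lack an invariant axis. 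The catenoid-estimate bound $W_G<2|D|$ and the isoperimetric lower bound $W_G>|D|$ that you invoke are therefore both being applied to an object that does not exist in the required category.

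The paper takes a genuinely different route. For $O_{24}$, the initial slice $\Sigma_0$ is the $G$-invariant graph $\mathcal{G}=(\mbox{$x$-axis}\cup\mbox{$y$-axis}\cup\mbox{$z$-axis})\cap B$; for small $t$ one takes $\Sigma_t=\partial T_\epsilon(\mathcal{G})$, a genus-zero $O_{24}$-invariant surface with $6$ boundary circles centered at the six points where the axes meet $\partial B$, and one extends equivariantly so that $\Sigma_1$ degenerates to the $1$-skeleton of the tessellation of $\partial B$ by six squares. Both endpoints have zero area, so nontriviality of the width is immediate, and since the only degenerations the min-max limit could undergo produce disjoint disks---none of which is $O_{24}$-invariant---no degeneration occurs and the limit has genus $0$ and $6$ ends. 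The tetrahedral and dodecahedral cases proceed with the analogous $G$-invariant graphs (rays to the vertices of the tetrahedron, respectively to the face-centers of the dodecahedron). If you want to salvage your outline, you would need to replace the ``doubled-disk plus necks'' picture by a sweepout built from a $G$-invariant graph, as in the paper.
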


Let us describe the genus $0$ surface with $6$ ends produced by Theorem \ref{platonic}.  Consider the graph $\mathcal{G}$ in $B$ consisting of the union of the $x$, $y$ and $z$ axes in $B$.  The free boundary minimal surface we construct is isotopic to the boundary of a tubular neighborhood of $\mathcal{G}$, resembling a three-dimensional ``cross."

Since the methods involved in proving Theorem \ref{main} arise from a global variational principle, they are quite versatile and apply in other ambient geometries.  They can be used to give a min-max construction of the self-shrinkers discovered by Kapouleas-Kleene-Moller \cite{KKM} and independently Nguyen \cite{Ng}:

\begin{theorem} (Kapouleas-Kleene-M{\o}ller \cite{KKM}, Nguyen \cite{Ng})\label{th4}
For $g$ large enough, there exist a self-shrinker $N_g$ with dihedral symmetry $\mathbb{D}_{2(g+1)}$ (acting by rotations about the $z$-axis) with one end and genus $g$.
As $g\rightarrow\infty$, $N_g$ converge as varifolds to the union of the self-shrinking $xy$-plane and self-shrinking sphere.
\end{theorem}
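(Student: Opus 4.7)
The plan is to repeat the variational argument for Theorem \ref{main} with the area functional replaced by the Gaussian weighted area
\[
F(\Sigma)=\int_\Sigma e^{-|x|^2/4}\,d\mathcal{H}^2,
\]
the ambient ball replaced by $\mathbb{R}^3$, and the reference configuration $D\cup C$ replaced by the union of the self-shrinking plane $P=\{z=0\}$ and self-shrinking sphere $S=\{|x|=2\}$; self-shrinkers are precisely the critical points of $F$. The group $\mathbb{D}_{2(g+1)}$ is generated by the rotation of angle $\pi/(g+1)$ about the $z$-axis together with the $\pi$-rotations through horizontal axes perpendicular to the $z$-axis; it preserves both $P$ and $S$ and acts as the full dihedral group on the circle $P\cap S$. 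One then runs an equivariant one-parameter Simon--Smith/Pitts min-max for $F$ in place of area.

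First I would construct a genus-$g$ sweepout $\{\Sigma_t\}_{t\in[-1,1]}$ modelled on a desingularization of $P\cup S$: remove $2(g+1)$ small $\mathbb{D}_{2(g+1)}$-symmetric disks from $P\cup S$ near $P\cap S$ and glue in catenoidal saddles, producing an embedded equivariant surface $\Sigma_0$ of genus $g$ with one end. I would arrange the sweepout so that $\Sigma_0$ is the middle slice, $\Sigma_t$ degenerates equivariantly onto $S$ as $t\to -1$, and $\Sigma_t$ degenerates onto $P$ as $t\to +1$. Nontriviality of this equivariant sweepout---the analog of the step in the proof of Theorem \ref{main} in which the sharp isoperimetric inequality on $B$ is invoked---would force the equivariant min-max width to exceed $\max\{F(P),F(S)\}$.

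The crucial step is to establish the strict bound
\[
\sup_{t\in[-1,1]} F(\Sigma_t)<F(P)+F(S),
\]
which is the analog of $|\Sigma_g|<|D|+|C|$. I would obtain this by reformulating the catenoid estimate of \cite{KMN} against the Gaussian weight $e^{-|x|^2/4}$ and applying it equivariantly at all $2(g+1)$ necks: the cost of opening each neck is lower order in the neck scale, so the total $F$-cost of opening all necks simultaneously can be driven strictly below any prescribed threshold. Combined with an equivariant Gaussian analogue of the Pitts/Simon--Smith regularity theorem, this would produce a smooth embedded $\mathbb{D}_{2(g+1)}$-equivariant self-shrinker $N_g$ of $F$-area strictly between $\max\{F(P),F(S)\}$ and $F(P)+F(S)$; the equivariant genus bound then pins the genus at $g$ and symmetry together with connectedness forces one end. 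Varifold convergence $N_g\to P\cup S$ as $g\to\infty$ would then follow by squeezing, since both the lower bound and the upper bound on the $F$-width tend to $F(P)+F(S)$.

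The main obstacle will be reworking the catenoid estimate against the Gaussian weight in a form uniform in $g$ with $2(g+1)$ simultaneous equivariant necks, so that the strict inequality survives as $g\to\infty$ and the min-max limit is a genuine shrinker rather than $P\cup S$ with integer multiplicity; everything else is, in principle, a translation of the structure used in the proof of Theorem \ref{main} into the shrinker setting.
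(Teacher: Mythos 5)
The paper does not prove Theorem \ref{th4}. It is stated with an explicit attribution to Kapouleas--Kleene--M{\o}ller and Nguyen, whose gluing constructions established it, and all the present paper offers is the short remark preceding the theorem that the equivariant min-max machinery behind Theorem \ref{main} ``can be used to give a min-max construction'' of these shrinkers, followed by the caveat that $g$ must be large to control the number of ends. No such construction is carried out anywhere in the paper. So there is no proof here against which to compare your sketch; you are expanding a remark, not reconstructing an argument.

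Taken as an attempt to flesh out that remark, your outline has the right broad shape, but it has two substantive gaps. First, the catenoid estimate of \cite{KMN} is not the correct tool for the upper bound $\sup_t F(\Sigma_t) < F(P) + F(S)$. That estimate is what the paper uses in Theorem \ref{fs}, where two parallel disks are joined by necks at \emph{isolated points}; in Theorem \ref{main}, where two surfaces intersect transversally along a \emph{curve}, the upper bound is free because Scherk-type desingularization along a curve is pure corner-rounding, which strictly decreases area, weighted or not. Your configuration $P \cup S$ falls into the second situation, since $P$ and $S$ meet along the circle $P \cap S$; the ``cost of opening each neck'' you try to control via a Gaussian catenoid estimate is not actually a cost, and the estimate you are trying to reformulate would not even apply in the form stated in \cite{KMN} to $2(g+1)$ points strung along a circle. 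Second, and far more seriously, you dispose of the non-triviality of the sweepout---the lower bound on the width, the analog of Proposition \ref{wd}---in a single unproved sentence. In the proof of Theorem \ref{main} this is the one genuinely delicate step: it needs Almgren's sharp isoperimetric inequality for $B$, the identification of the flat disk as the unique equivariant minimizer, and the hemisphere-swap invariant of the equivariant saturation. The Gaussian analog requires a substitute for every one of these ingredients (a sharp weighted isoperimetric rigidity statement, an equivariant characterization of the minimizer, a swap argument adapted to the non-compactness of $\mathbb{R}^3$, and a mechanism ruling out escape of the min-max sequence to infinity), and you do not indicate what any of them should be. Until that step is supplied, the sketch does not establish the theorem.
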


While our existence theorem could produce examples of low genus with the symmetries of those of Theorem \ref{th4} when $g$ is small, we would need to take $g$ large to rule out that the min-max limit we obtain has many ends. 

In gluing constructions, one always needs to take the genus along the desingularizing curves to be large.  One advantage of the variational approach is that one can produce minimal surfaces with low genus as in Theorem \ref{main}.  However, one still has to rule out various compressions (``neck-pinches") that occur as the min-max sequence converges to its limiting minimal surface.  Thus uniqueness or classification theorems are still needed when the genus is low to rule out unwanted behavior such as the surfaces $\Sigma_g$ having genus greater than zero but one end.  When the genus is large, typically one can take a limit in the underlying symmetry group to see that the limiting stationary varifold one produces has enough symmetries to be easily classifiable (see for instance the proof of Theorem \ref{main} or Theorem 1.7 in \cite{KMN}).

On a related note, using higher parameter families, Marques-Neves \cite{mn} proved that manifolds with positive Ricci curvature contain infinitely many embedded minimal surfaces.   Li-Zhou \cite{lz} have adapted their argument to the free boundary setting.  Of course, manifolds with many symmetries such as the $3$-ball trivially contain infinitely many minimal free boundary surfaces by considering all rotations of the disk or critical catenoid.   Thus it is not so clear how to use Marques-Neves' method to obtain new minimal surfaces.  Recently Aiex \cite{Ai} has shown that the set of min-max surfaces produced by the Marques-Neves procedure is non-compact, which should rule out that the infinitely many surfaces produced are simply rotations of the basic ones.   Still, these methods would not allow any precise control on the topological type.

Let us explain the geometric idea behind the existence of the surfaces $\Sigma_g$ purported by Theorem \ref{main}.  They arise via an equivariant min-max procedure. The sweepouts we consider are somewhat unusual in that they do not start and end at surfaces with zero area as one usually has in min-max theory.  Instead, they begin and end at the unit disk $D$ (thus one can alternatively think of them as being parameterized by $\mathbb{S}^1$ instead of $[0,1]$).  Let us describe these sweepouts. There's an optimal sweepout $C_t$ of $B$ by annuli orthogonal to $D$ with the critical catenoid sitting in the middle of the foliation.  We consider the family $\Sigma_t=C_t\cup D$, desingularized by adding in genus $g$ in a $\mathbb{D}_{g+1}$-equivariant manner along the circle of intersection between $D$ and $C_t$.  By the $\mathbb{D}_{g+1}$-equivariance, each surface in this family divides $B$ into two components of equal volume. By Almgren's solution \cite{Al} of the isoperimetric problem for $B$, it follows that each surface in $\Sigma_t$ has area at least $\pi$. The key observation is that the sweepout $\Sigma_t$ (and any sweepout in the equivariant saturation of $\Sigma_t$) \emph{interchanges} the two hemispheres of $B$ determined by $D$.  

One then considers the width $W$ for the min-max problem for sweepouts in the equivariant saturation of $\Sigma_t$.  In order to show that this sweepout is non-trivial, we must show that $W$ is greater than the area of the disk, $\pi$.  If $W=\pi$, then by definition of width, one could find sweepouts all of whose slices have area very close to $\pi$.  By Almgren's result again \cite{Al}, it follows that all of these slices are very close as varifolds to the disk $D$ as $D$ is the only equivariant surface solving the isoperimetric problem.  But one can show that such sweepouts cannot swap the two hemispheres.

While generalizing the min-max theory to the free boundary setting has already been carried out (\cite{DR}, \cite{lz}, \cite{LZ2}), from a min-max perspective, there are some new ingredients that are needed in Theorem \ref{main}.   In min-max theory, the regularity of the surface produced is proved by showing that the min-max sequence is approximated locally by stable surfaces and using the compactness theorem for such surfaces due to Schoen \cite{schoen}.  

In \cite{ketover2}, this theory was extended to considering sweepouts satisfying a symmetry.  There one restricts to sweepouts of $G$-equivariant surfaces for some group of isometries $G$ so that each surface in the sweepout intersects the singular set of the group action transversally.  One shows there (Section 4 in \cite{ketover2}) that one can produce $G$-stable replacements (i.e. minimal surfaces that are stable among $G$-equivariant deformations) and moreover that $G$-stability is equivalent to stability.  Thus Schoen's compactness result applies.

 In the setting of Theorem \ref{main}, there is a key difference.  The min-max sequence contains segments of the singular set of the action of $\mathbb{D}_{g+1}$ on $B$.  In balls about such segments one can find approximating $\mathbb{Z}_2$-stable surfaces.  But because the surface contains the axis, $\mathbb{Z}_2$-stability \emph{does not} seem to imply stability.  Thus we need to replace Schoen's estimates in the regularity theory with the fact that minimal surfaces of bounded area and genus have a convergent subsequence \cite{CS}.  The convergence may be non-smooth over finitely many points, but this does not impede the regularity theory or genus bounds.  The arguments thus are inherently two-dimensional and only apply to the Simon-Smith theory \cite{ss}.

The organization of this paper is as follows.   In Section \ref{rig} we prove an $\epsilon$-regularity type theorem expressing the rigidity of the free boundary disk.  In Section \ref{minmax} we state the equivariant min-max theorem we need in this paper in the free boundary setting.  In Section \ref{con} we prove Theorem \ref{main}.  In Section \ref{fraserschoen} we construct the examples of Fraser-Schoen, and in Section \ref{platonics} the Platonic examples of genus $0$.  Finally in Section \ref{proof} we prove the equivariant min-max theorem.

It has been brought to my attention that N. Kapouleas and M. Li have applied gluing methods to obtain an analog of Theorem \ref{main} when $g$ is large.

\emph{Acknowledgements:} I thank Brian White for a conversation and Otis Chodosh for bringing the work of Volkmann \cite{V} to my attention.  I thank Fernando Marques and Andr\'e Neves for discussions and encouragement.

\section{Rigidity of free boundary minimal disk}\label{rig}
We need the following strong rigidity statement for the free boundary minimal disk which may be of independent interest:
\begin{proposition}\label{rigidity}
There exists $\epsilon>0$ so that if $\Sigma$ is an embedded free boundary minimal surface in $B$ and
\begin{equation}
|\Sigma| < \pi + \epsilon,
\end{equation}
then $\Sigma$ is a flat free boundary disk $D$.
\end{proposition}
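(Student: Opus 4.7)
The proof proceeds by contradiction and varifold compactness. Assume there is a sequence $\Sigma_i$ of embedded free boundary minimal surfaces in $B$ with $|\Sigma_i|\to\pi$ and such that no $\Sigma_i$ is a flat disk. The starting point is the boundary length identity $|\partial\Sigma_i|=2|\Sigma_i|$: minimality gives $\Delta_{\Sigma_i}|x|^2=4$, and the free boundary condition $\Sigma_i\perp\partial B$ forces the outward conormal of $\Sigma_i$ along $\partial\Sigma_i$ to equal the position vector $x$, so that $\partial_\nu|x|^2=2$ there. Applying the divergence theorem yields the identity. In particular both $|\Sigma_i|$ and $|\partial\Sigma_i|$ are uniformly bounded, and tend to $\pi$ and $2\pi$ respectively.

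By the compactness theorem for free boundary stationary integer rectifiable varifolds of uniformly bounded mass (Gr\"uter--Jost, De Lellis--Ramic), a subsequence of $\Sigma_i$ converges as varifolds to a free boundary stationary integer rectifiable varifold $V$ with $\|V\|\leq\pi$. Since each $\Sigma_i$ is embedded in the simply connected ball $B$, it separates $B$ into two Caccioppoli regions whose indicator functions converge (after further subsequencing) in $L^1$ to a limit pair of Caccioppoli sets $U^\pm\subset B$. The crux of the argument is to identify $V$ as a multiplicity-one flat equatorial disk. For this I would invoke the sharp lower area bound $\|V\|\geq\pi$ for nontrivial integer rectifiable free boundary stationary varifolds in $B$, with equality only for such a disk. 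For smooth embedded $\Sigma$ this follows from the monotonicity formula, combined with the observation that the only flat surfaces in $B$ meeting $\partial B$ orthogonally are the equatorial disks through the origin; the varifold version is extracted by decomposing $V$ component-wise according to its density structure and applying the smooth bound to each piece. Together with $\|V\|\leq\pi$ this forces $V=D$ for an equatorial disk $D$ with multiplicity one.

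Because $V$ is smooth of multiplicity one, the free boundary version of Allard's regularity theorem upgrades varifold convergence to $C^\infty$ graphical convergence of $\Sigma_i$ over $D$ up to and including $\partial B$. The flat disk $D$ is non-degenerate as a free boundary minimal surface modulo the rotations of $B$ fixing the origin, which carry $D$ to nearby equatorial disks; the kernel of its Jacobi operator consists precisely of the infinitesimal generators of these rotations. Hence the implicit function theorem shows that any free boundary minimal surface sufficiently $C^1$-close to $D$ must itself be a flat disk through the origin, contradicting the assumption that $\Sigma_i$ is not a flat disk. The main obstacle is the sharp varifold-level area lower bound together with the rigidity of the equality case: the smooth version is accessible via monotonicity, but extending the rigidity to integer rectifiable free boundary stationary varifolds is the delicate point, and this is where one must combine the Caccioppoli structure arising from embeddedness with Almgren's isoperimetric inequality for $B$.
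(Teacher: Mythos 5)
Your approach is genuinely different from the paper's, and it contains a gap precisely at the step you flagged as delicate. The paper does not try to pass to a varifold limit first; instead it proves an a priori curvature estimate: if $|\Sigma|\leq\pi+\epsilon$ then $\sup_{\overline{\Sigma}}|A|^2\leq C$. This is done by a blowup argument at a point of maximal curvature combined with Volkmann's monotonicity formula, which yields the tilt-excess inequality $\int_\Sigma |(x-x_0)^\perp|^2/|x-x_0|^4\leq|\Sigma|-\pi$. Because this excess is scale-invariant and tends to $0$ as $|\Sigma|\to\pi$, the blowup limit must be a plane or half-plane, contradicting the normalization $|A|^2(0)=1$. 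With curvature bounds in hand, Fraser--Li smooth compactness applies directly, the limit is a disk, and by Nitsche's theorem every free boundary minimal disk in $B$ is flat; smooth convergence then forces $\Sigma_i$ itself to be flat for large $i$. No varifold regularity, no Allard, no Jacobi operator analysis.

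The gap in your argument is the identification $V=D$ with multiplicity one. You acknowledge that the sharp lower bound $\|V\|\geq\pi$ with rigidity in the equality case is "the delicate point," but the remedy you propose, "decomposing $V$ component-wise according to its density structure and applying the smooth bound to each piece," is not a valid technique: an integer rectifiable free boundary stationary varifold has no canonical decomposition into smooth pieces, and the Brendle/Fraser--Schoen/Volkmann argument that gives $|\Sigma|\geq\pi$ with equality iff $\Sigma=D$ is proved for smooth embedded surfaces (the divergence theorem and the boundary identity $\partial_\nu|x|^2=2$ are used pointwise). Until that rigidity is extended to the varifold setting, the Allard step has nothing to anchor to, because Allard requires the limit to be a multiplicity-one plane with small excess. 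Your appeal to Almgren's isoperimetric theorem also does not close the gap: the $\Sigma_i$ are stationary, not isoperimetric minimizers, and there is no reason the enclosed volumes $\mathrm{vol}(U_i^\pm)$ converge to $\tfrac{2\pi}{3}$, so Almgren's uniqueness statement does not apply to the limit. Finally, even if the identification of $V$ were in place, the concluding implicit-function-theorem step asserting that the Jacobi operator of $D$ with Robin boundary condition $\partial_\nu\phi=\phi$ has kernel exactly $\mathrm{span}\{x|_D,y|_D\}$ is true but unproved in your write-up, and is heavier machinery than needed: once one has smooth graphical convergence to $D$, the surfaces are eventually topological disks and Nitsche's theorem finishes immediately.
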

One can think of Proposition \ref{rigidity} as strong form of Allard's regularity theorem \cite{A} for free boundary minimal surfaces that holds \emph{up the boundary.}  Indeed, Allard's theorem loosely speaking gives that a minimal surface in the ball with area close enough to the flat disk is a graph over this flat disk in a sub-ball.   Adding the free boundary condition to these assumptions, Proposition \ref{rigidity} gives a stronger conclusion as one obtains graphicality up to the boundary and moreover, one gets a unique graph, i.e., the flat disk itself.  This improvement is one of the special features of free boundary minimal surfaces. For related (sharp) gap theorems for the free boundary minimal disk, see \cite{AN}. 

The proof of Proposition \ref{rigidity} uses a monotonicity formula for free boundary minimal surfaces that was discovered in various guises independently by Fraser-Schoen (Theorem 5.4 in \cite{FS}), Ros-Vergasta \cite{RV}, Brendle \cite{B}, and obtained in most general form by Volkmann \cite{V}.

To state the consequence of Volkmann's formula that we need, let $\Sigma$ be a free boundary minimal surface in $B$.   For any $x_0$ in the support of $\Sigma$, there holds (by plugging $H=0$ into Theorem 3.4.3 and using equation (4) in \cite{V}):
\begin{equation}\label{volk}
\int_\Sigma\frac{|(x-x_0)^\perp |^2}{|x-x_0|^4} \leq |\Sigma|-|D|.
\end{equation}
The vector $(x-x_0)^\perp$ is the component of the vector $x-x_0$ that is othogonal to the tangent plane $T_{x_0}\Sigma$.   Volkmann's monotonicity formula compares a quantity at two scales --  letting one scale approach $0$, and the other approach $\infty$ in his formula one obtains \eqref{volk}.  

One easy consequence of \eqref{volk} is the fact that $D$ has the smallest area among free boundary minimal surfaces, and is the unique surface with this property (cf. \cite{B}).  The integrated term in \eqref{volk} 
\begin{equation}
E(x_0,\Sigma):=\int_\Sigma\frac{|(x-x_0)^\perp |^2}{|x-x_0|^4}
\end{equation}
is a kind of ``tilt-excess," that controls the deviation of $\Sigma$ from a plane.  A convenient feature is that the point $x_0$ in \eqref{volk} is arbitrary, while the right hand side of \eqref{volk} does not depend on $x_0$.  A key point is that $E(x_0,\Sigma)$ is \emph{scale invariant} if $\Sigma$ is rescaled about $x_0$.  In other words 
\begin{equation}\label{scale}
E(x_0,\Sigma)=E(0,\tau(\Sigma-x_0)) \mbox{ for any } \tau > 0.
\end{equation}
\\
\noindent
 \emph{Proof of Proposition \ref{rigidity}}\\
We first establish the following claim:

\emph{claim: There exists $\epsilon>0$ and $C>0$ so that whenever $\Sigma$ is an embedded free boundary minimal surface with $|\Sigma|\leq\pi + \epsilon$ there hold the following curvature bounds
\begin{equation}
\sup_{x\in\overline{\Sigma}}|A|^2(x) \leq C.
\end{equation} }
\\
This claim easily implies Proposition \ref{rigidity}.  Indeed, suppose the claim is true.  If Proposition \ref{rigidity} failed, it means there is a sequence of free boundary surfaces that are different from flat disks, with areas approaching $\pi$ and by the claim, uniformly bounded curvature.  Thus by Theorem 5.1 in \cite{ML}, $\Sigma_i$ converge smoothly (up to the boundary) to a disk (see also Section 5 of \cite{LZ2}).  Since the convergence is smooth, it follows that $\Sigma_i$ are all disks for $i$ large enough.  But by a theorem of Nitsche \cite{N}, the only free boundary minimal disks are the flat ones.  This is a contradiction. 

It remains to prove the claim.  We will use a blowup argument (cf. \cite{white2}).  Suppose the claim is false.  Thus there is a sequence $\Sigma_i$ of free boundary minimal surfaces in $B$ where
\begin{equation}\label{areas22}
|\Sigma_i|-\pi\rightarrow 0
\end{equation}
while 
\begin{equation}\label{toinf}
A_i :=\sup_{x\in\Sigma_i}|A|^2(x) \rightarrow \infty.
\end{equation}
For each $i$, choose a point $x_i\in\Sigma_i$ where the $\sup$ $A_i$ is attained in \eqref{toinf} (of course $x_i$ may be in the boundary of $\Sigma_i$).  Then consider the sequence of surfaces
\begin{equation}
\tilde{\Sigma}_i := A_i(\Sigma_i - x_i),
\end{equation}
which satisfy
\begin{equation}\label{bounds}
\sup_{x\in\tilde{\Sigma}} |A|^2 \leq 1 
\end{equation}
and
\begin{equation}\label{still}
|A|^2_{\tilde{\Sigma}_i}(0)=1.
\end{equation}

Since dilations preserve angles, the surface $\tilde{\Sigma}_i$ still satisfies the free boundary condition for the new domain $A_i(B-x_i)$.

By the curvature bounds \eqref{bounds} it follows that $\tilde{\Sigma}_i\rightarrow \Sigma_\infty$ smoothly.  Note that $\Sigma_\infty$ is either complete without boundary or (after rotating $\Sigma_j$ potentially) is contained in a half-space and its boundary is contained in a plane.  The convergence in this latter case is smooth up to the boundary by the  boundary Schauder estimates of Agmon-Douglis-Nirenberg (Theorem 9.1 in \cite{Ag}) because the free boundary condition implies that the surfaces locally near the boundary satisfy an elliptic equation with homogeneous boundary conditions.  By the smooth convergence $\Sigma_j\rightarrow\Sigma_\infty$ (up to the boundary if it exists) and \eqref{still} we still have:
\begin{equation}\label{yay} 
|A|^2_{\Sigma_\infty}(0)=1.
\end{equation}
In light of \eqref{areas22} and \eqref{volk}, we obtain
\begin{equation}
E(x_0,\Sigma)\rightarrow 0.
\end{equation}
By the scale invariance \eqref{scale}, we obtain
\begin{equation}
E(0,\tilde{\Sigma}_i)\rightarrow 0.
\end{equation}
Since the convergence $\tilde{\Sigma}_j\rightarrow \Sigma_\infty$ is smooth, we have
\begin{equation}
0\leq E(0,\Sigma_\infty)\leq \liminf E(0,\tilde{\Sigma}_i) =0.
\end{equation}
Since $E(0,\Sigma_\infty)=0$, it follows that $\Sigma_\infty$ is a plane or half-plane.  But this violates \eqref{yay}.  Thus the claim is established.  
\qed

\section{Equivariant min-max theory}\label{minmax}
In \cite{ketover2}, an equivariant min-max theory was developed to produce min-max minimal surfaces with symmetries. 
Throughout this paper $G$ will denote either $\mathbb{Z}_n$, $\mathbb{D}_n$ or one of the three groups associated to the Platonic solids $T_{12}$, $O_{24}$, or $I_{60}$ acting standardly on $B$.

For any $x\in B$ we first define the \emph{isotropy subgroup} $G_x$ at $x$ as:
$$G_x=\{g\in G\;\;|\;\;gx = x\}$$
We then define the \emph{singular locus} of the group action as points with nontrivial isotropy subgroup:
$$\mathcal{S}=\{ x\in B\;\;|\;\;G_x\neq\{e\}\}$$ 

The set $\mathcal{S}$ consists of straight line segments emanating from the origin.
In \cite{ketover2}, one considers sweepouts of $B$ that intersect $\mathcal{S}$ transversally.  But the sweepouts we consider in Section \ref{con} to prove Theorem \ref{main} are not transverse to the singular set -- they \emph{contain} line segments which are part of $\mathcal{S}$.  Thus the theory developed in \cite{ketover2} needs to be extended to that setting.  We make the following more general definition:

A {\it (genus $g$) $G$-sweepout} of $B$ is a family of closed sets $\{\Sigma_t\}_{t=0}^1$, continuously varying in the Hausdorff topology such that:
\begin{enumerate}[label=\roman*.,topsep=2pt,itemsep=-1ex,partopsep=1ex,parsep=1ex]
\item $\Sigma_t$ is a smooth embedded surface of genus $g$ for $0<t<1$ varying smoothly
\item $\Sigma_0$ and $\Sigma_1$ are the union of a smooth surface together with a collection of arcs
\item Each $\Sigma_t$ is $G$-equivariant, i.e. $g(\Sigma_t)=\Sigma_t$ for $0\leq t\leq 1$ and all $g\in G$
\end{enumerate}
\begin{remark}
Note that if $\Sigma_t$ contains an arc $\mathcal{A}\subset\mathcal{S}$ for some $t$, then $\mathcal{A}$ is in the support of  $\Sigma_t$ for \emph{all} $t$.  This follows from the fact the surfaces vary smoothly and one cannot smoothly ``push off the axis."  See Lemma 3.6 in \cite{ketover2}.
\end{remark}

Given such a $G$-sweepout $\{\Sigma_t\}_{t=0}^1$ we may define the $G$-equivariant saturation $\Pi=\Pi_{\{\Sigma_t \}}$ identically as in \cite{ketover2}.  We can then define the min-max width:
\begin{equation}\label{inf}
W^G_\Pi = \inf_{\{\Lambda_t\}\in\Pi}\sup_{t\in [0,1]}|\Lambda_t|.
\end{equation}
\noindent

We can then consider a sequence of sweepouts $\{\Sigma_t\}^i$ the area of whose maximal slice converges to $W^G_\Pi$.  From $\{\Sigma_t\}^i$ we may then choose a sequence of slices $\Sigma_i:=\Sigma_{t_i}^i$ with area converging to $W^G_\Pi$.  Such a sequence of surfaces we will call a \emph{min-max sequence}. 

With this notation we have the following Min-Max theorem (where the relevant terms in ii) are explained in the remarks following the theorem):
\begin{theorem}\label{eqminmax} 
If \begin{equation}\label{isnontrivial}
W^G_\Pi > \max(|\Sigma_0|,|\Sigma_1|)
\end{equation}
then there exists a min-max sequence $\Sigma_j$ converging as varifolds to $n\Gamma$, where $\Gamma$ is a smooth embedded connected free boundary minimal surface in $B$ and $n$ is a positive integer.  Moreover, the following statements hold:
\begin{enumerate}[label=\roman*.] 
\item $W^G_\Pi=n|\Gamma|$
\item For $j$ large enough, after performing finitely many surgeries on $\Sigma_j$ and discarding some components, each remaining component of $\Sigma_j$ is isotopic to $\Gamma$ and there are $n$ such components.  Thus 
\begin{equation}
n(\mbox{genus}(\Gamma))\leq g.
\end{equation}
\item If the surfaces $\Sigma_t$ all contain a segment of isotropy $\mathbb{Z}_2$, then $\Gamma$ contains this segment as well and $n$ is odd.
\item If the surfaces $\Sigma_t$ are orthogonal to or are disjoint from an arc $\mathcal{A}\subset\mathcal{S}$ of isotropy $\mathbb{Z}_2$, then $\Gamma$ either intersects $\mathcal{A}$ orthogonally (if at all) or else contains $\mathcal{A}$ and $n$ is even.
\item $\Gamma$ intersects arcs of isotropy $\mathbb{Z}_n$ (for $n\neq 2$) orthogonally (if at all).
\end{enumerate}
\end{theorem}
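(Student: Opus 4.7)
The plan is to adapt the equivariant min-max construction of \cite{ketover2} to the free boundary setting, building on the free boundary min-max theory developed in \cite{DR, lz, LZ2}, while accommodating the new feature that sweepout slices may actually \emph{contain} segments of the singular locus $\mathcal{S}$. First I would carry out the standard pull-tight procedure inside the equivariant saturation $\Pi$ to produce a min-max sequence $\Sigma_j$ with $|\Sigma_j|\to W^G_\Pi$ converging as varifolds to a stationary integral varifold $V$ of mass $W^G_\Pi$ satisfying the free boundary condition; the nontriviality hypothesis \eqref{isnontrivial} prevents $V$ from being concentrated on $\Sigma_0$ or $\Sigma_1$. Combined with the equivariant combinatorial argument of Pitts as executed in Section~4 of \cite{ketover2}, this gives that $V$ is $G$-almost-minimizing in sufficiently small $G$-admissible annuli.

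Next I would produce $G$-equivariant stable replacements in small balls. Three cases arise. In balls disjoint from $\mathcal{S}$, the replacement is stable in the ordinary sense and Schoen's curvature estimate gives smoothness. In balls meeting $\mathcal{S}$ transversally, the $\mathbb{Z}_n$-stable replacement coincides with an ordinary stable surface, exactly as in \cite{ketover2}, and Schoen again applies. The genuinely new case is a ball centered on an arc $\mathcal{A}\subset\mathcal{S}$ of $\mathbb{Z}_2$-isotropy which is contained in every sweepout slice (by the Remark following the definition); here $\mathcal{A}$ lies in the support of $V$, the replacement is only $\mathbb{Z}_2$-stable, and reflecting across $\mathcal{A}$ can decrease area, so Schoen's estimate is not available. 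In this case I would replace stable compactness with the Choi--Schoen compactness theorem \cite{CS} for minimal surfaces of bounded area and genus, the genus bound being inherited from $g$ through the equivariant surgery procedure. Choi--Schoen yields smooth subsequential convergence away from a finite set of points, and a removable singularity argument combined with the monotonicity-based $\epsilon$-regularity of Section~\ref{rig} patches across these points. Free boundary regularity at $\partial B$ is handled by a local reflection across $\partial B$ reducing to the interior theory, with the boundary Schauder estimates of \cite{Ag} giving smoothness up to $\partial B$ as in \cite{lz, LZ2}.

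Once $\Gamma$ is constructed, connectedness and the multiplicity identity $W^G_\Pi = n|\Gamma|$ in (i) follow from the standard argument that distinct $G$-orbits of components could otherwise be separated and area-reduced equivariantly, contradicting the infimum in \eqref{inf}. Statement (ii) is a $G$-equivariant version of the Simon--Smith surgery analysis, essentially identical to Section~7 of \cite{ketover2}. For the axis statements, (iii) follows because a smooth $G$-equivariant limit containing an arc $\mathcal{A}$ of $\mathbb{Z}_2$-isotropy in its support must be locally invariant under the reflection fixing $\mathcal{A}$; even multiplicity would correspond to the limit folding smoothly across $\mathcal{A}$, which together with the $G$-equivariance and smoothness of $\Gamma$ forces $n$ odd. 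Statement (iv) is the dichotomy one gets when the sweepout slices are orthogonal to or disjoint from $\mathcal{A}$: the local tangent cone at a point of $\Gamma\cap\mathcal{A}$ is a union of planes meeting $\mathcal{A}$ orthogonally (giving perpendicular intersection) or else $\mathcal{A}$ sits inside $\Gamma$ with even multiplicity. Statement (v) follows since at a point of $\mathbb{Z}_n$-isotropy with $n\geq 3$, any smooth $\mathbb{Z}_n$-invariant tangent plane must be the plane perpendicular to the axis.

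The main obstacle is exactly the new case above, where $\Gamma$ contains a $\mathbb{Z}_2$-axis and the usual stable-replacement regularity theory breaks down. Replacing Schoen's estimate with Choi--Schoen compactness requires careful control of the genus under the equivariant surgery used to extract replacements, and a delicate local analysis at the finitely many points where the convergence is only varifold; it is also the reason the theorem is inherently two-dimensional and applies only to the Simon--Smith framework.
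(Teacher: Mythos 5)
Your proposal follows the same overall architecture as the paper: pull-tight in the convex space of free-boundary equivariant vector fields, extract a $G$-almost-minimizing min-max sequence, split into interior/boundary and on-axis/off-axis/transverse cases, replace Schoen's stable-surface compactness with Choi--Schoen compactness where only $\mathbb{Z}_2$-stability is available, and then run the Colding--De Lellis regularity machine (which, as you correctly observe, only needs a compactness statement and not a pointwise curvature estimate). The treatment of (iii)--(v) by counting graphs swapped/fixed by the involution is also the same idea as the paper.

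However, there is a genuine gap at the one step the paper actually has to work hardest for. In the case where the sweepout contains a $\mathbb{Z}_2$-axis $\mathcal{A}$, you write ``here $\mathcal{A}$ lies in the support of $V$, the replacement is only $\mathbb{Z}_2$-stable,'' and then proceed to apply Choi--Schoen to the sequence of replacements $V_j$. But you never justify that the replacement $V_j$ obtained by minimizing among $\mathbb{Z}_2$-equivariant isotopies in an annulus is in fact a \emph{smooth embedded} surface across the axis. In the transverse case of \cite{ketover2} this follows because the Meeks--Yau cut-and-paste shows the component disks of the minimizing limit are equivariant and disjoint, and Schoen--Simon then applies; but when the replacement contains the axis, the disk $D^j_{k'}$ through $\mathcal{A}$ minimizes area only among \emph{equivariant} isotopies, and it is not clear that the unconstrained area minimizer with the same boundary is $\mathbb{Z}_2$-equivariant. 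The paper devotes Proposition \ref{minimizing} and Lemma \ref{eqmin} to this point: if the unconstrained minimizer $S$ fails to be equivariant, one uses Meeks--Yau to see $S$ and $\tau(S)$ are disjoint except at the boundary, confines the equivariant minimization to the region $R$ between them, invokes Schoen--Simon curvature estimates for simply connected surfaces to get compactness, and finishes with a tangent-cone analysis at the axis (following Almgren--Simon and De Lellis--Pellandini) to rule out self-intersection of $V_j$ along $\mathcal{A}$. Without something playing this role, your Choi--Schoen step is being applied to a sequence whose members are not yet known to be smooth minimal surfaces, and the argument does not close.

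A secondary point: you propose to handle the finitely many bad points via ``the monotonicity-based $\epsilon$-regularity of Section~\ref{rig}.'' Proposition \ref{rigidity} is a global rigidity statement for free boundary surfaces in all of $B$ and is not a local removable-singularity tool; it plays no role in the paper's proof of Theorem \ref{eqminmax}. The relevant $\epsilon$-regularity here is the Choi--Schoen one (small $\int |A|^2$ implies curvature bounds), which you already invoke; the removability of the finitely many concentration points is part of that package together with Ilmanen's integrated Gauss--Bonnet bound on total curvature in proper subannuli, as the paper explains.
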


\begin{remark}\normalfont
The connectedness of $\Gamma$ follows from the Frankel-type property of embedded free boundary minimal surfaces in convex bodies: any two must intersect (Lemma 2.4 in \cite{ML}).
\end{remark}
\begin{remark}\normalfont\label{neckpinches}
By ``surgeries" in ii) is meant either ``$G$-equivariant neckpinch" (cf. Remark 1.6 in \cite{ketover2}) or ``collapse of topology" which we will define below.   A ``$G$-equivariant neckpinch," is obtained in three possible ways: 1) by removing a cylinder and gluing in two disks so that the cylinder and disks as well as the ball they bound is disjoint from $\mathcal{S}$, 2) removing a cylinder centered around $\mathcal{S}$ and adding in two disks, where each disk intersects $\mathcal{S}$ once, or as a ``half neckpinch" at the boundary.   This third type of surgery is needed because of the free boundary and is defined as follows.  Let $\Sigma$ be a surface in $B$ with $\partial\Sigma\subset\partial B$.  Consider a disk $D\subset\Sigma$ with $\partial D = \cup_{i=1}^4\gamma_i$ where $\gamma_i$ are smooth arcs and the four arcs are concatenated in increasing order to give $\partial D$.   The arc $\gamma_1$ is contained in $\partial B$, $\gamma_2$ is contained in the interior of $B$, $\gamma_3$ contained in $\partial B$ and $\gamma_4$ is contained in the interior of $B$.  The arcs $\gamma_2$ and $\gamma_4$ bound the disks $D_1\subset\overline{B}$ and $D_3\subset\overline{B}$ where $\partial D_1 = \gamma_2\cup\alpha_1$ where $\alpha_1$ is an arc contained in $\partial B$.  Likewise $\partial D_3 = \gamma_4\cup\alpha_3$ where $\alpha_3$ is an arc contained in $\partial B$.  The surgery on $\Sigma$ is the removal of $D$ from $\Sigma$ and addition of $D_1$ and $D_2$.   This third type of surgery is loosely speaking a ``half neck-pinch" at the boundary $\partial B$.  

Finally, in the case where the sweepout surfaces $\Sigma_t$ contain an arc $\mathcal{A}'$ of isotropy $\mathbb{Z}_2$, we include ``collapse of topology" in the admissible surgeries.  This consists of the following. Fix $x\in\overline{\mathcal{A'}}$ so that $G_x$ is either $\mathbb{D}_k$ or $\mathbb{Z}_2$.   Fix a small $G_x$-invariant ball $B_x$ about $x$ and let $\Sigma$ be a $G_x$-invariant surface in $B_x$ with $\partial\Sigma\subset\partial B_x$.  We say $\Sigma'$ arises from $\Sigma$ by collapse of topology if $\Sigma'$ is the varifold limit of $G$-equivariant isotopies supported in $B_x$ and if
there is a sequence of neck-pinches supported in $B_x$ (not necessarily $G$-equivariant), so that $\Sigma'$ arises topologically from $\Sigma$ after performing these neck-pinches.  
\end{remark}

\begin{remark}\normalfont\label{throwing}
The reason we include the ``collapse of topology" in the admissible surgeries in ii) is the following.  In the case where the sweepout surfaces contain an axis of the singular set, we only prove that the $\mathbb{Z}_2$-stable replacements $V_j$ around the axis converge smoothly to their smooth limit $V$ away from finitely many points, and near such points it may not be possible to perform $G$-equivariant neck-pinches to obtain $V$. 

As an example of this phenomenon, consider the Costa-Hoffman-Meeks minimal surface $C_g$ of genus $g$ in $\mathbb{R}^3$, that has dihedral symmetry and contains the $\mathbb{Z}_2$ isotropy axes of the symmetry group.  One can consider the family of rescalings $\{\lambda_i C_g\}$ for $\lambda_i\rightarrow 0$ that converges to the plane with multiplicity $3$.  The convergence is smooth away from the origin, where the genus is collapsing.   There is \emph{no way} to perform equivariant neck-pinches on this family to obtain a surface isotopic to three disjoint sheets as any neck-pinch will break the dihedral symmetry.  On the other hand, the configuration consisting of three planes is indeed in the limit of $G$-equivariant isotopies of $C_g$  and it is achievable through surgeries (though not equivariant ones). 
\end{remark}
\begin{remark}\normalfont
While we only state Theorem \ref{eqminmax} in the case where the ambient manifold is a three-ball, since all considerations are local, it is clear that it holds for a general closed three-manifold.
\end{remark}
We defer the proof of Theorem \ref{eqminmax} til Section \ref{proof}.
\section{Proof of Theorem \ref{main}}\label{con}
\subsection{Sweepouts}
Let us first construct the sweepouts we will need for the surfaces constructed in Theorem \ref{main}.  In this section, denote by $D$ the unit disk in the $xy$ plane.

Fix $g\geq 1$.  Consider the group $\mathbb{D}_{g+1}$ acting on $B$ by rotations and of $2\pi/(g+1)$ about the $z$ axis, and also rotations of $\pi$ about the $g+1$ line segments $\{L_i\}_{i=0}^{g}$ (setting $\theta_i = i\pi / (g+1)$): 
\begin{equation}
L_i := \{(r\cos (\theta_i) , r\sin(\theta_i), 0)\in\mathbb{R}^3\;|\;-1\leq r\leq 1\}
\end{equation}

Denote by $C$ the unique critical catenoid with symmetry group $\mathbb{D}_{g+1}$ encircling the $z$-axis.  There exists a sweepout $\{C_t\}_{t=-1}^1$ of $B$ with dihedral symmetry $\mathbb{D}_{g+1}$ with the following properties:

\begin{enumerate}[label=\roman*.] 
\item $C_{-1} = \{\mbox{z-axis}\}\cap B$
\item $C_1 = \partial D$
\item $C_t$ is a smooth annulus for $-1 < t< 1$.
\item $C_{0}$ is the critical catenoid $C$
\item $|C_t| < |C| - At^2$ for some $A>0$
\item $C_t\cap D$ is a round circle $R_t$ for all $-1< t\leq 1$ and $R_t$ sweep-out $D$.
\item $C_t$ is orthogonal to $D$ for  $-1 < t< 1$
\end{enumerate}

Note that the only singular slices in the foliation $C_t$ occur when $t=1$ and $t=-1$ when $C_t$ consists of one dimensional graphs.

In order to construct $C_t$, we start with the critical catenoid $C_0$.  It is stable if one considers deformations that vanish on $\partial B$.  On the other hand, if one considers more general deformations, then its Morse index has recently been computed to be $4$ (\cite{SZ}, \cite{T}, \cite{D}).

  Let $\phi$ be a smooth function defined on $C_0$ and let $n$ be a choice of unit normal on $C_0$.
Then the formula for the second derivative of area is (c.f. Section 5.1 \cite{D}):
\begin{equation}\label{secondvar}
\frac{d^2}{d^2t}\Big|_{t=0}|C_0+tn\phi| = \int_\Sigma |\nabla\phi|^2-|A|^2\phi^2-\int_{\partial\Sigma}\phi^2.
\end{equation}
Setting $\phi_1=1$ in \eqref{secondvar} we obtain
\begin{equation}\label{aaa}
\frac{d^2}{d^2t}\Big|_{t=0}|C_0+tn\phi_1| = -\int_\Sigma |A|^2d\mu-\mathcal{H}^1(\partial\Sigma) <0.
\end{equation}
Thus $\phi_1$, while not an eigenfunction of the stability operator, still gives a rotationally symmetric direction for decreasing the area of the critical catenoid.



Let us now give construction of the optimal sweepout of annuli $C_t$. Let $R$ denote the region of $B\setminus C_0$ that is disjoint from the $z$-axis.  We can consider dilations of the critical catenoid $\lambda C_0$ for $\lambda\geq 1$.  Such dilations preserve the region $R$ in that $\lambda R\subset R$.  For some value $\lambda_0$, $\{\lambda C_0\}_{\lambda=1}^{\lambda=\lambda_0}$ gives a foliation of $R$ interpolating between $C_0$ and $\partial D$.  Since dilating by $\lambda$ takes the ball of radius $B_{1/\lambda}(0)$ to the ball $B$, it follows from the monotonicity formula that the area of $B\cap\lambda C_0$ is a decreasing function in $\lambda$.  This gives the required foliation of $R$.  To fill out $R'=B\setminus R$ we can argue as follows.  First use $\phi_1$ defined above to extend $C_0$ into $R'$ to $C_t$ for $-\epsilon\leq t\leq 0$ by
 \begin{equation}
C_t:= C_0+t\phi_1(x)n(x).
\end{equation}
In light of \eqref{aaa}, $|C_t|\leq |C_0|-At^2$ for some $A>0$.

 Note that the region $R'\setminus\cup_{t\in[-\epsilon,0]}C_t$ is mean convex and the boundary circles of $C_{-\epsilon}$ no longer bound a minimal annulus since the circles $\partial C_{-\epsilon}$ are contained in the region in $\partial B$ where no catenoids can penetrate. Thus there is a path of rotationally symmetric surfaces beginning at $C_{-\epsilon}$ and ending at the two flat disks bounded by $\partial C_{-\epsilon}$ that increases area an arbitrarily small amount along the way.  It is then easy to use parallel disks to $\partial C_{-\epsilon}$ joined by a tiny tube about the $z$-axis to fill out the rest of $R'$.  This gives the required sweepout.

Let us define the singular family of sets for $-1\leq t\leq 1$:
\begin{equation}
\Sigma_t = C_t \cup D.
\end{equation}

We will now amend each surface $\Sigma_t$ in the sweepout in a very small neighborhood of $D\cap C_t$.  The $g+1$ lines $L_i$ intersect $C_t \cap D$ in $2(g+1)$ equally spaced points and divide $C_t\cap D$ into $2(g+1)$ consecutive arcs $\{A_i\}_{i=1}^{2(g+1)}$.   Along $A_1$ are being joined $4$ pieces of smooth surfaces in an ``X": two from $C_t$ and two from $D$.   Desingularize this intersection along $A_1$ by pairing one of the pieces of $C_t$ with one from $D$, and the other $C_t$ piece with the other $D$ piece.  Then proceed to pair off the pieces in the opposite way along $A_2$  and continue in this alternating fashion along $\partial D$ to arrive at a new surface $\tilde{\Sigma}_t$.  Note that to preserve $\mathbb{D}_{g+1}$ symmetry, once the desingularization has been performed along $A_1$, it extends in a unique way to the rest of $\partial D$.  Thus rotating $180^o$ about any of the lines $L_i$ is still a symmetry of $\tilde{\Sigma}_t$.  

 It is clear that we can perform this desingularization so that all changes are supported in $T_{\epsilon(t)}(C_t\cap D)$ for any suitably small continuous positive function $\epsilon(t)$ and we can moreover choose $\epsilon(t)$ to approach $0$ as $t$ approaches $-1$ or $1$. This desingularization is an area-decreasing procedure since desingularizing amounts to ``rounding corners" and thus lowers area.  We can think that each arc $A_i$ is labelled alternatively $+$ or $-$, and combining two consecutive such arcs gives a period of the rotational symmetry of the resulting surface.  Note that $\tilde{\Sigma}_t$ still contains the lines $\{L_i\}_{i=0}^{g}$ since the surfaces are only being adjusted in the interior of each arc $A_i$. 

This desingularization has the effect that $\tilde{\Sigma}_t$ is now a surface of genus $g$ with $\mathbb{D}_{g+1}$ symmetry: rotations of angle $2\pi/(g+1)$ together with such rotations composed with a ``flip" about any of the lines $L_i$.   In summary,  one obtains a sweepout $\tilde{\Sigma}_t$ satisfying the following properties (reparameterizing the sweepout by $[0,1]$ instead of $[-1,1]$):
\begin{enumerate}[label=\roman*.]
\item $|\tilde{\Sigma}_t|< |C|+|D|$ for all $0\leq t\leq 1$
\item $\tilde{\Sigma}_0=D\cup (\{\mbox{z-axis}\}\cap B)$
\item $\tilde{\Sigma}_1 = \partial D$
\item For each $0<t<1$ the genus of $\tilde{\Sigma}_t$ is $g$
\item $\tilde{\Sigma}_t$ has dihedral symmetry $\mathbb{D}_{g+1}$
\item For all $t$, $\tilde{\Sigma}_t$ contains the lines $\{L_i\}_{i=0}^{g}$ and (in particular) the origin 
\end{enumerate}

The sweepout $\{\tilde{\Sigma}_t\}$ satisfies the definition of $\mathbb{D}_{g+1}$-sweepout in Section \ref{minmax}.  Let us denote by $\Pi$ the equivariant saturation of sweepouts containing $\{\tilde{\Sigma}_t\}$.
\begin{remark}
Note that unlike in $2$-dimensions, there are several different types of dihedral symmetry in $3$-dimensions.  The Costa-Hoffman-Meeks surfaces have an extra symmetry coming from reflections in certain planes, and their symmetry group is $\mathbb{D}_{2(g+1)}$.  Our group is only half as large. We do this so that the resulting orbifold $B/G$ has no boundary aside from $\partial B/G$ which is the setting in which equivariant min-max theory was developed in \cite{ketover2}.
\end{remark}

The set $B\setminus D$ has two components.  Denote by $C_1$ the component in the northern hemisphere, and $C_2$ the component in the southern.  The sweepout $\tilde{\Sigma}_t$ has the following key property.  
\begin{enumerate}
\item For each $t$, $\tilde{\Sigma}_t$ divides $B$ into two components $A(t)$ and $B(t)$ of equal volume \item $A(0)=C_1$, $B(0) = C_2$
\item  $A(1)=C_2$, $B(1) = C_1$.
\end{enumerate}
\
In other words, in the course of the sweepout, the two hemispheres of $B$ are swapped.  One can see (1) because rotating $180^o$ through any of the lines $L_i$ acts by isometry to preserve the surface $\tilde{\Sigma}_t$ while interchanging the components $A(t)$ and $B(t)$. 

Moreover, any $\{\Lambda_t\}\in\Pi$ satisfies (1)-(3) as well. As a consequence, for any such $\{\Lambda_t\}$ (denoting by $A(t)$ and $B(t)$ the components of $B\setminus\Lambda_t$) there is a $t\in [0,1]$ with
\begin{equation}\label{swap}
\mbox{vol}(A(t)\cap C_1) = \frac{1}{2}\mbox{vol}(C_1)=\frac{2\pi}{3}.
\end{equation}

\subsection{Non-triviality of the sweepout}
In this section we prove
\begin{proposition}\label{wd}
For each $g\geq 1$ the width satisfies:
\begin{equation}\label{widthg}
 W_g>\pi=|D|.
\end{equation}
\end{proposition}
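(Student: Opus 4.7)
The plan is to argue by contradiction, leveraging the hemisphere-swap property of the sweepout together with Almgren's sharp isoperimetric theorem and its rigidity statement for $B$.

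First I would establish the trivial lower bound $W_g\geq \pi$. Every slice $\Lambda_t$ of every sweepout in $\Pi$ divides $B$ into two components of equal volume $\frac{2\pi}{3}$; Almgren's solution to the isoperimetric problem in $B$ then gives $|\Lambda_t|\geq |D|=\pi$, with equality only when $\Lambda_t$ is a flat disk through the origin. Taking infimum over sweepouts and supremum over $t$, $W_g\geq\pi$.

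Next I would suppose for contradiction that $W_g=\pi$. Then one may select a minimizing sequence $\{\Lambda^n_t\}_{t\in[0,1]}\in\Pi$ with $\sup_{t\in[0,1]}|\Lambda^n_t|\to\pi$. For each $n$, the volume function $t\mapsto \mathrm{vol}(A^n(t)\cap C_1)$ varies continuously in $t$ (by Hausdorff continuity of the sweepout), begins at $\mathrm{vol}(C_1)$ at $t=0$ and ends at $0$ at $t=1$ in view of items (1)--(3). The intermediate value theorem therefore gives a $t_n\in[0,1]$ at which \eqref{swap} holds, i.e.\ $\mathrm{vol}(A^n(t_n)\cap C_1)=\frac{1}{2}\mathrm{vol}(C_1)$. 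Since $\pi\leq |\Lambda^n_{t_n}|\leq \sup_t |\Lambda^n_t|\to\pi$, the special slice satisfies $|\Lambda^n_{t_n}|\to\pi$.

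Now I would upgrade this convergence to convergence of the bounded sets $A^n(t_n)$. Their perimeters in $\overline{B}$ are uniformly bounded by $\pi+o(1)$, so, viewing them as Caccioppoli sets, $\{\mathbf{1}_{A^n(t_n)}\}$ is precompact in $L^1(B)$. Passing to a subsequence, $\mathbf{1}_{A^n(t_n)}\to \mathbf{1}_{A_\infty}$ in $L^1$ with $\mathrm{vol}(A_\infty)=\frac{2\pi}{3}$ and, by lower semicontinuity of perimeter, $\mathrm{Per}_{\overline{B}}(A_\infty)\leq \pi$. Almgren's theorem applied to $A_\infty$ forces equality and identifies $A_\infty$ with one of the two hemispheres $C_1$, $C_2$ — in fact any of them, since the $\mathbb{D}_{g+1}$-equivariance of the slices passes to the limit and the only equivariant flat disk is $D$.

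The contradiction is then immediate: the $L^1$ convergence gives $\mathrm{vol}(A_\infty\cap C_1)=\lim_n \mathrm{vol}(A^n(t_n)\cap C_1)=\frac{1}{2}\mathrm{vol}(C_1)$, whereas $A_\infty\in\{C_1,C_2\}$ forces this quantity to be either $\mathrm{vol}(C_1)$ or $0$. Hence $W_g>\pi$. The main obstacle is to set up Step 3 so that the sharp rigidity in Almgren's inequality is actually applicable to the whole sequence of slices (rather than just to a subsequential varifold limit); the $BV$-compactness argument above does this cleanly by turning the area bound on the hypersurfaces into a perimeter bound on the companion sets, where the hemispheres are the unique extremizers.
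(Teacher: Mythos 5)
Your argument is correct and takes a genuinely different route from the paper's. You both reduce to the hemisphere‑swap property and Almgren's theorem, and both single out a slice $\Lambda^n_{t_n}$ at which the volume split \eqref{swap} occurs. From there the paper works in the varifold topology: it shows all slices are $\mathbb{F}$-close to $D$, then runs a rather delicate coarea/slicing argument (finding a good shell $S_\sigma$, controlling the lengths of $\Sigma^j_{t_j}\cap\partial S_\sigma$ and the areas of the capping disks, and estimating the ``filigree'' volume by isoperimetry) to show the volume split cannot happen. You instead turn the area bound on the hypersurfaces into a relative perimeter bound on the companion sets $A^n(t_n)$, use BV compactness and lower semicontinuity to pass to a Caccioppoli set $A_\infty$ saturating Almgren's inequality, invoke the rigidity to identify $A_\infty$ as a half‑ball, and read off a contradiction from the $L^1$ convergence of the volumes $\mathrm{vol}(A^n(t_n)\cap C_1)$. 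Your route is shorter and more conceptual: it uses the rigidity of Almgren's theorem at the macroscopic level rather than having to quantify closeness through coarea slicing. One point to sharpen: your parenthetical ``the only equivariant flat disk is $D$'' is not literally true for $g=1$, where the $xz$- and $yz$-disks are also $\mathbb{D}_2$-invariant; the correct fix is to observe that the \emph{set} $A_\infty$ inherits $\mathbb{Z}_{g+1}$-invariance (it is preserved by the rotations about the $z$-axis, which fix the two sides of a slice, while the flips about the $L_i$ swap them), and for every $g\geq 1$ no vertical half-ball is invariant under a nontrivial rotation about the $z$-axis, so $A_\infty\in\{C_1,C_2\}$. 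Also, the relevant perimeter is the relative perimeter in the open ball $B$ (not counting the flat part on $\partial B$), which is what matches both $|\Lambda^n_{t_n}|$ and Almgren's theorem; your notation $\mathrm{Per}_{\overline{B}}$ should be read this way.
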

\noindent
The equation \eqref{widthg} expresses the non-triviality of the sweepout and is what permits the min-max method to work.

We need the following fundamental theorem about isoperimetric surfaces in the unit ball:  
\begin{theorem} (Almgren \cite{Al}, Bokowski-Sperner \cite{BS}, Ros \cite{R})\label{almgreniso}
The isoperimetric surfaces in $B$ are hyperplanes through the origin or spherical caps meeting $\partial B$ orthogonally.  Thus if a surface $\Sigma$ in $B$ with $\partial\Sigma\subset\partial B$ divides $B$ into two components with equal volume, then $|\Sigma|\geq\pi$, and equality holds if and only if $\Sigma$ is the intersection of $B$ with a hyperplane through the origin.
\end{theorem}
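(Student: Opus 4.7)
The plan is a classical three-step scheme: (i) produce a perimeter-minimizing separator via geometric measure theory, (ii) upgrade that minimizer by regularity theory to a smooth free boundary constant mean curvature (CMC) surface, (iii) classify such surfaces by an Alexandrov-type moving planes argument, and then read off the equal-volume case as a corollary.

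For step (i), I would define, for $0 < v < |B|$,
\[
I(v) = \inf\{ \mathcal{H}^2(\partial^* E \cap B) : E \subset B \text{ Caccioppoli}, \ |E| = v \},
\]
and obtain a minimizer $E^*$ from compactness of BV functions with uniformly bounded perimeter together with lower semicontinuity of perimeter under $L^1$ convergence. For step (ii), De Giorgi's interior $\varepsilon$-regularity, together with standard boundary regularity for free boundary isoperimetric minimizers, shows that $\Sigma := \partial^* E^* \cap B$ is smooth up to $\partial B$ in dimension three (the singular set is empty in this dimension). The Lagrange multiplier from the volume constraint forces constant mean curvature $H$, while variations tangent to $\partial B$ give the orthogonality condition $\Sigma \perp \partial B$.

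Step (iii) is the heart of the proof. I would run an Alexandrov moving-planes argument: for each unit vector $e \in S^2$, slide the hyperplane $P_t = \{x \cdot e = t\}$ from $t = 1$ toward $t = 0$ until the reflection of $\Sigma \cap \{x \cdot e > t\}$ across $P_t$ first touches $\Sigma$. If the first contact is interior, the Hopf maximum principle applied to the CMC equation in graphical coordinates forces $\Sigma$ to be symmetric about $P_t$; if the first contact lies on $\partial B$, the orthogonality $\Sigma \perp \partial B$ combined with the boundary-point lemma delivers the same conclusion. Varying $e$ through the origin then shows $\Sigma$ is rotationally symmetric about some axis. Solving the ODE for rotationally symmetric CMC surfaces in $B$ meeting $\partial B$ orthogonally leaves exactly two families: equatorial flat disks (when $H=0$) and spherical caps orthogonal to $\partial B$ (when $H \neq 0$).

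Finally, I would specialize to $|E^*| = |B|/2$. Applying the above symmetry in every direction forces the minimizer to pass through the origin, and among the totally umbilical candidates only the equatorial disk does so (a non-degenerate spherical cap orthogonal to $\partial B$ separates $B$ into unequal volumes). Hence $|\Sigma| \geq \pi$ with equality iff $\Sigma$ is an equatorial disk, which is the stated sharp bound. The main obstacle is step (iii) at the free boundary: one must handle the case where the first contact of the moving plane with $\Sigma$ occurs at $\partial \Sigma \subset \partial B$, which requires a carefully adapted boundary-point lemma for the CMC equation with the Neumann-type orthogonality condition. This is precisely the delicate analytic ingredient supplied in the proofs of Ros and Bokowski--Sperner.
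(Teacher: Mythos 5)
The paper does not prove this statement; it is quoted verbatim from the literature (Almgren, Bokowski--Sperner, Ros), so the only thing to assess is whether your proposed proof would actually work. Steps (i) and (ii) are fine and standard, but step (iii) — the heart of your argument — has a genuine gap. The method of moving planes requires that the reflection across the plane $P_t=\{x\cdot e=t\}$ preserve the ambient domain and its boundary condition, and for $t\neq 0$ it does not: the reflection of $B\cap\{x\cdot e>t\}$ across $P_t$ is not contained in $B$, and the reflected piece of $\Sigma$ has its boundary off $\partial B$, so neither the interior touching argument nor any boundary-point lemma for the orthogonality (Neumann-type) condition can be applied at a first contact on $\partial B$. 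Moving planes works for free boundary/capillary CMC surfaces in half-spaces and slabs, where reflections preserve the domain, not in a ball.

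Moreover, the conclusion you want from step (iii) is simply false at the level of generality you invoke: constant mean curvature plus orthogonality to $\partial B$ does not force the surface to be a flat disk or spherical cap. The critical catenoid — which is rotationally symmetric, free boundary, and minimal, and is one of the central objects of this very paper — is a counterexample even to your final ODE claim that rotationally symmetric free boundary CMC surfaces come only in the two totally umbilical families (for $H\neq 0$ there are likewise Delaunay-type free boundary annuli). To rule such surfaces out one must use more than the first-order conditions: either the stability of the isoperimetric minimizer (second variation for volume-preserving variations sliding along $\partial B$, as in Ros--Vergasta/Nunes-type arguments) or a symmetrization scheme (Almgren; Bokowski--Sperner's spherical symmetrization), which is in fact what the cited proofs do — the ``delicate ingredient'' there is not a boundary Hopf lemma. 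Your deduction of the equal-volume case from the classification is fine, but as it stands the classification itself is unproved and your route to it cannot be repaired without introducing one of these additional ingredients.
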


\noindent
\emph{Proof of Proposition \ref{wd}:}  \\
\indent
Assume toward a contradiction that $W_g=|D|=\pi$.  Thus by the definition of width, there is a sequence of sweepouts $\{\Sigma_t\}^i$ in the saturation $\Pi$ so that 
\begin{equation}
\sup_{t\in [0,1]}|\Sigma_t^j|  \leq  \pi+\epsilon_j,
\end{equation}
\noindent
for some sequence $\epsilon_j\rightarrow 0$.  

Since each surface $\{\Sigma_t\}^j$ divides $B$ into two components of equal volume, it follows that for each fixed $t$, $\{\Sigma_t\}^j$ is a minimizing sequence for the isoperimetric problem in $B$, and thus for large $j$ by Theorem \ref{almgreniso} each surface $\{\Sigma_t\}^j$ must be close as varifolds to some disk.  Since the sweepouts are $\mathbb{D}_{g+1}$ equivariant, it follows that this disk is precisely $D$.  In other words for some $\delta_i\rightarrow 0$, and $j$ large enough, we have 
\begin{equation}\label{w}
\sup_{t\in [0,1]} \mathbb{F}(\Sigma_t^j,D)\leq \delta_j,
\end{equation}
where $\mathbb{F}$ denotes the $\mathbb{F}$-metric on varifolds.

On the other hand, for any $j$, the sweepout $\{\Sigma_t\}^j$ must interchange the two components $C_1$ and $C_2$ of $B\setminus D$, which readily violates \eqref{w}.  

Let us give more details. By \eqref{swap}, since each sweepout $\{\Sigma_t\}^j$ interchanges $C_1$ and $C_2$, for each $j$, there is some $t_j\in [0,1]$ so that $\Sigma^i_{t_i}$ bounds two regions $R_1$, and $R_2$, where
\begin{equation}
\mbox{vol}(R_1\cap C_1) = \frac{1}{2}\mbox{vol}(C_1)=\frac{2}{3}\pi
\end{equation}
and 
\begin{equation}\label{www}
\mbox{vol}(R_2\cap C_1) = \frac{1}{2}\mbox{vol}(C_1)=\frac{2}{3}\pi.
\end{equation}
Choose $\epsilon>0$ and consider the cap
\begin{equation}
S_\epsilon := (C_1\setminus T_\epsilon(D))\cap B_{1-\epsilon}(0),
\end{equation}
where $B_{1-\epsilon}(0)$ denotes the ball of radius $1-\epsilon$ about the origin, and $T_\epsilon(D)$ is the $\epsilon$-tubular neighborhood about $D$ in $\mathbb{R}^3$.
The boundary of $S_\epsilon$ is contained in the sets
\begin{equation}
\partial S^1_\epsilon=\{(x,y,z)\in\mathbb{R}^3\;|\; z=\epsilon\}\cap B
\end{equation}
together with
\begin{equation}
\partial S^2_\epsilon=\{(x,y,z)\in\mathbb{R}^3\;|\; z\geq\epsilon\}\cap\partial B_{1-\epsilon}(0).
\end{equation}
\noindent
In light of \eqref{w}, for $t_i$ chosen above and $j$ large enough and any $\epsilon>0$ and any $\eta>0$
\begin{equation}\label{area11}
|\Sigma^j_{t_i}\cap S_\epsilon| < \eta.
\end{equation}
Thus by the coarea formula
\begin{equation}\label{co}
\int_\epsilon^{2\epsilon}\mathcal{H}^1(\Sigma^j_{t_i}\cap\partial S^1_\sigma)d\sigma \leq C|\Sigma^j_{t_i}\cap S_\epsilon| < C\eta.
\end{equation}
By \eqref{co} and Sard's lemma we can find $\sigma\in [\epsilon,2\epsilon]$ so that 
\begin{equation}\label{length2}
\mathcal{H}^1(\Sigma^j_{t_i}\cap\partial S^1_\sigma)\leq \frac{2C\eta}{\epsilon}.
\end{equation}
and
\begin{equation}
\Sigma^i_{t_i}\cap \partial S^1_\sigma
\end{equation}
consists of several closed circles or half-circles.  By the coarea formula again we can choose $\sigma$ so that
in addition $\partial S^2_\sigma$ consists of several closed circles or half-circles and
 \begin{equation}\label{area22}
\mathcal{H}^1(\Sigma^j_{t_j}\cap\partial S^2_\sigma)\leq \frac{2C\eta}{\epsilon}.
\end{equation}

By the isoperimetric inequality for the plane and $\partial B_{1-\sigma}$, \eqref{length2} and \eqref{area22} the disks $D_i$ in $\partial S_\sigma$ bounded by the collection of circles $\Sigma^j_{t_j}\cap \partial S_\sigma$ have total area:
\begin{equation}\label{area12}
\sum_i |D_i| \leq C'(\eta/\epsilon)^2.
\end{equation}

Note the decomposition
\begin{equation}\label{decomp}
S_\sigma = (\cup A_i)\cup X.
\end{equation}

In \eqref{decomp}, $A_i$ are the components of the interior of the closed surface obtained by capping off a component of $\Sigma^j_{t_j}$ in $\partial S_\sigma$ with one of the disks $D_i$ (i.e. ``filigree").  The set $X$ is defined to be the complement of the $A_i$.  Note that the sets $A_i$ are not necessarily pairwise disjoint as they may be nested.  Nonetheless, by the isoperimetric inequality in $\mathbb{R}^3$, \eqref{area11} and \eqref{area12} it follows that 
\begin{equation}\label{small}
\sum_i \mbox{vol}(A_i)\leq C ((\eta/\epsilon)^2 + C\eta)^{3/2}.
\end{equation}
Since $\sigma < \epsilon$ we can estimate the volume of the shell region
\begin{equation}\label{cyl}
\mbox{vol}(C_1\setminus S_{\sigma})  < C\epsilon.
\end{equation}
The set $X$ is connected and so is contained in either $R_1$ or $R_2$.  Suppose without loss of generality that $X\subset R_1$.  It follows that $R_2\cap C_1$ is composed of some of the filigree sets $A_i$ together with some parts of $C_1\setminus S_\sigma$.  Thus combining \eqref{cyl} with \eqref{small} we obtain:
\begin{equation}\label{howbigb}
\mbox{vol}(R_2\cap C_1)\leq  C ((\eta/\epsilon)^2 + C\eta)^{3/2} + C\epsilon.
\end{equation}
If first $\epsilon$ and then $\eta$ are chosen small enough, \eqref{howbigb} contradicts \eqref{www}.

\qed
\begin{remark}
As an alternative approach to proving Proposition \ref{wd} one could try to argue as follows. If $W_g=\pi$, then after a pull-tight procedure, each slice in a tightened sweepout $\Gamma_t$ (with maximal area very close to $\pi$) is close to a stationary varifold with area $\pi$.  Since each surface passes through the origin, by the monotonicity formula one obtains that all surfaces in $\Gamma_t$ are close to a disk.  The argument then proceeds as in Proposition \ref{wd}. This argument is less elementary since to use the monotonicity formula it seems one needs the integrality of limits of min-max sequences, which already uses the almost minimizing property.
\end{remark}
\subsection{Completion of the proof of Theorem \ref{main}}
Since $W_g>\pi$ by Proposition \ref{wd}, \eqref{isnontrivial} is satisfied, and thus we may apply Theorem \ref{eqminmax} to obtain a $\mathbb{D}_{g+1}$-equivariant free boundary minimal surface $\Sigma_g$ in $B$.  It remains to determine the topological type of $\Sigma_g$. 

In the following, we will first enumerate the possible neck-pinches and collapse of topology that may occur and the topological type of possible min-max limits.  With more work one can show that the neck-pinches we enumerate are the only possible ones.  The only fact we shall use in the proof of Theorem \ref{main} is that the genus of the min-max limit obtained is either $g$ or $0$, which follows just from the equivariance.  The reader may thus wish to skip to \eqref{islimit} as the intervening paragraphs below are not logically necessarily.  

Recall from Remark \ref{neckpinches} that there are three possible types of surgeries:  $\mathbb{Z}_{g+1}$ neckpinches occurring by removing an annulus centered around an arc of the singular set, surgeries removing a ``half-annulus" at the boundary $\partial B$, and \emph{ordinary} neckpinches performed in the interior of $B$ that occur in regions disjoint from $\mathcal{S}$.  In the following, we will describe the possible compressions.  

  Let us describe the potential $\mathbb{Z}_{g+1}$-neckpinch.  We can describe this compression on the initial sweepout surfaces $\tilde{\Sigma}_t$.  In the northern hemisphere, $\tilde{\Sigma}_t$ coincides with $C_t$.  Consider the circle $\gamma$ on $C_t$ that is homotopically nontrivial obtained by intersecting $C_t$ with a plane of constant positive $z$-value.  The circle $\gamma$ encircles the $z$-axis (which has isotropy $\mathbb{Z}_{g+1}$) and it is possible to compress along this circle.  By equivariance, if this circle is compressed, so must be a corresponding circle in the southern hemisphere.  After this surgery on $\tilde{\Sigma}_t$, one obtains two disks (one in each hemisphere) as well as one surface of genus $g$ which has one boundary circle.  Only this latter surface with genus $g$ can contribute to the min-max limit since the min-max limit contains the origin in its support.    Thus if this $\mathbb{Z}_{g+1}$ neckpinch occurs, one obtains a surface with one boundary component and genus $g$.  

We now will argue that there are no possible $\mathbb{Z}_2$-compressions or ordinary neckpinches that can bring down the genus of $\tilde{\Sigma}_t$.  To see this, consider any simple closed curve $\gamma$ in the interior of $B$ representing a non-trivial homology class of $\tilde{\Sigma}_t$.  Such a curve is contained in the tiny tubular neighborhood of $C_t\cap D$ on which $C_t$ and $D$ are desingularized to produce $\tilde{\Sigma}_t$.
But such a curve has non-zero intersection number with one of the lines $L_i$.  It follows that no such compression can occur since $\mathbb{Z}_2$-neckpinches as well as ordinary neckpinches by definition are obtained by removing an annulus that is disjoint from the singular set of the group action.   One way to understand this phenomenon is to consider Scherk's singly periodic surface in $\mathbb{R}^3$ that is the desingularization of two orthogonal planes and has dihedral symmetry.  There are two ways to ``snap the necks" but neither of the resulting configurations still has dihedral symmetry.  

One can still collapse the genus of $\tilde{\Sigma}_g$ by ``collapse of topology" as in Remark \ref{throwing}.  In this case, one can use an isotopy to press all of the genus of $\tilde{\Sigma}_t$ to the origin in a $\mathbb{D}_{g+1}$-equivariant fashion, in which case $\Gamma$ is a free boundary disk with odd multiplicity (by Theorem \ref{eqminmax}iii).  Since $W_g<|D|+|C|< 3|D|$, if follows that the multiplicity is $1$.  By a theorem of Nitsche \cite{N}, this disk must be flat, violating Proposition \ref{wd} as $W_g>|D|$.  Thus the throwing away of topology cannot occur.

Let us now consider the compressions that occur along ``half-annuli" at the boundary $\partial B$.  Let us describe the arcs along which the compression can occur in the model $\tilde{\Sigma}_t$ surface.  Recall that in producing $\tilde{\Sigma}_t$, along the arc $A_1$ of $C_t\cap D$, the part of $C_t$ contained in the northern hemisphere is being connected with the component of the disk $D\setminus (C_t\cap D)$ that touches $\partial B$.  Thus there is an arc $\alpha$ contained on $\tilde{\Sigma}_t$ beginning at the boundary $\partial B\cap C_t$ and ending at $\partial B\cap D$.  It is clear $\alpha$ bounds a ``half-disk" and such a surgery is admissible.  By the equivariance, if such a surgery is performed, there are $2(g+1)$ copies of it that must be performed concurrently.  The effect of this surgery is to bring the genus of $\tilde{\Sigma}_t$ down to zero while the number of ends becomes one (though this end is rather checkered).   After performing this surgery, one can see that no further ones are possible. 

In total, we have shown either i) $\Sigma_g$ has genus $g$ and three ends, or else ii) $\Sigma_g$ has genus $g$ and one end, or else iii) genus zero and one end.   Let us now show that when $g$ is large, the last two cases cannot occur.  

To achieve this, we prove \eqref{limit}, i.e., that 
\begin{equation}\label{islimit}
\Sigma_g\rightarrow D\cup C,
\end{equation}
\noindent
from which it follows that when $g$ is large, $\Sigma_g$ has genus $g$ and three ends.  

Consider any subsequence (not relabelled) of $\Sigma_g$.  We will show that $\Sigma_g$ has a subsequence converging to $D\cup C$, which implies \eqref{islimit}.  Let $\Sigma_\infty$ denote the (free boundary) stationary integral varifold that is a subsequential limit of $\Sigma_g$ as $g\rightarrow\infty$.  Since $\Sigma_g$ is invariant under $\mathbb{D}_{g+1}$, it follows that the support of $\Sigma_\infty$ is invariant under rotations about the $z$ axis.   

Since $\Sigma_g$ contains the lines $\{L_i\}_{i=0}^{g}$ that become denser and denser as $g\rightarrow\infty$, it follows that the disk $D$ is in the support of $\Sigma_\infty$.   By the monotonicity formula, it also follows that the support of $\Sigma_\infty$ is connected.  

Let us first consider the blowup set $\mathcal{B}$ for the curvature of $\Sigma_g$:
\begin{equation}
\mathcal{B} = \{x\in B\;|\; \inf_{r>0}\liminf_{g\rightarrow\infty} \int_{B_r(x)}|A|_{\Sigma_g}^2d\mu\geq\epsilon_0\}
\end{equation}
The constant $\epsilon_0>0$ is chosen so that by the $\epsilon$-regularity theorem of Choi-Schoen \cite{CS}, if $x\in B\setminus\mathcal{B}$, then some subsequence of $\Sigma_g$ satisfies uniform curvature estimates in a neighborhood of $x$.

 We claim:
\\
\\
\emph{The set $\mathcal{B}\cap\mbox{int}(B)$ consists of a single circle $S$ centered about $0$ in $D$ of some radius $r\in (0,1)$.}  
\\
\\ \noindent
Let us first prove this claim.  By the equivariance of $\Sigma_g$ which increases as $g\rightarrow\infty$ it follows that $\mathcal{B}$ is the union of (potentially infinitely many) round circles about the $z$-axis.

Let us first show that $\mathcal{B}\cap\mbox{int}(B)$ contains a circle in the disk $D$.  Suppose not.  Then for any sub-disk $D'$ of $D$ (passing to a subsequence in $g$) there exists a $r>0$ so that
\begin{equation}\label{lowcurv}
\sup_{x\in T_r(D')}|A|^2_{\Sigma_g}\leq C
\end{equation}
\noindent
where $T_r(D')$ denotes the $r$-tubular neighborhood about $D'$.  By \eqref{lowcurv}, it follows that $\Sigma_g\rightarrow\Sigma_\infty$ smoothly on compact subsets of $D$.  By the free boundary condition and the unique continuation property for minimal surfaces (Lemma 5 in \cite{MY}) and since $\Sigma_\infty$ contains $D$, it follows that $\Sigma_\infty$ restricted to $T_r(D)$ consists of $kD$ where $k$ is a positive integer.  By the equivariance, it follows that $k$ is odd.  Since $|\Sigma_g|<|D|+|C|< 3|D|$, it follows that $k=1$.   As $\Sigma_\infty$ restricted to $T_r(D)$ is precisely $D$, the connectness of $\Sigma_\infty$ implies that $\Sigma_\infty=D$. 

To rule this situation out, observe that if indeed $\Sigma_g\rightarrow D$, then for $g$ large enough, the area of $\Sigma_g$ approaches that of $D$.  By Proposition \ref{rigidity}, this implies $\Sigma_g$ is itself $D$, which contradicts the fact that $|\Sigma_g|>|D|$.   Thus we have a contradiction, and it follows that $\mathcal{B}$ contains a circle $S$ in the interior of the disk $D$.  Note that circle may be trivial, i.e. have zero radius, and we will rule this situation out later.

We claim that for some $r_i(g)\rightarrow 0$, the genus of $\Sigma_g$ restricted to $T_{r_i}(S)$ is equal to $g$.  To see this, suppose that instead $T_{r}(S)$ contains no genus for $g$ large (by the equivariance, the genus $g$ is either all contained in $T_r(S)$ or zero in $T_r(S)$).   Then by Ilmanen's integrated Gauss-Bonnet argument (Lemma 1 in Lecture 3 in \cite{I}), one obtains for some $C<\infty$ 
\begin{equation}\label{hm}
\sup_{g\in\mathbb{N^+}}\int_{\Sigma_g\cap T_{r/2}(S)}|A|^2 \leq C.
\end{equation} 
However, since $S\subset\mathcal{B}$, then by definition of $\mathcal{B}$ and the rotational invariance we obtain
\begin{equation}\label{hm}
\sup_{g\in\mathbb{N^+}}\int_{\Sigma_g\cap T_{r/2}(S)}|A|^2 = \infty.
\end{equation}
This is a contradiction.

Since we  have just proved that the genus is collapsing into $S$, it follows that 
\begin{equation}\label{onecircle}
\mathcal{B}\cap\mbox{int}(B)=S.
\end{equation}

Indeed, let $S_2$ be another circle contained in $(\mathcal{B}\cap\mbox{int}(B))\setminus S$.  Since the genus of $\Sigma_g$ in a neighborhood of $S_2$ is $0$ (as the genus is contained in smaller and smaller neighborhoods about $S$), we can again apply Ilmanen's integrated Gauss-Bonnet argument \cite{I}, to obtain for some $\epsilon>0$ sufficiently small and $C_\epsilon<\infty$  
\begin{equation}\label{hm}
\sup_{g\in\mathbb{N^+}}\int_{\Sigma_g\cap T_{\epsilon}(S_2)}|A|^2 \leq C_\epsilon.
\end{equation}

But since we have assumed $S_2\subset\mathcal{B}$, then by definition of $\mathcal{B}$ and the rotational invariance we obtain
\begin{equation}\label{hm2}
\sup_{g\in\mathbb{N^+}}\int_{\Sigma_g\cap T_{\epsilon}(S_2)}|A|^2 = \infty.
\end{equation}
\noindent
The identity \eqref{hm2} contradicts \eqref{hm} and thus in fact \eqref{onecircle} holds.

To prove the claim, it now suffices to show the circle $S$ is non-trivial, i.e. its radius $r$ is not zero. If instead $r=0$, then $\Sigma_g$ converge to a multiple of the disk $D$.  One can see this as follows.  In this case, the curvature of $\Sigma_g$ is bounded away from the origin, so that $\Sigma_\infty\setminus\{0\}$ is a smooth free boundary minimal surface (maybe with multiplicity).  Since $\Sigma_\infty$ contains the disk, $\Sigma_\infty$ is either $D$ or some multiple $kD$. Any other component $C_\infty$ would be a smooth rotationally symmetric disk containing the origin in the northern hemisphere, together with its mirror image in the south.  But this violates the maximum principle unless both of these components are $D$.  So in fact $\Sigma_\infty=kD$ for some integer $k$.  We have already ruled out the case $k=1$.  From the equivariance of $\Sigma_\infty$, it follows that $k$ is odd.  But because $|\Sigma_g|<|C|+|D|< 3|D|$ we obtain $|\Sigma_\infty| < 3|D|$ and thus $k<3$.  Thus the case $r=0$ is ruled out and $r$ is instead some value in $(0,1)$.

In sum we obtain that $\Sigma_\infty$ is a rotationally symmetric stationary varifold in $B$ containing $D$ that is singular only at the circle $A(r)$ for some $0<r<1$.  Since $\Sigma_\infty\neq D$,  it follows that $\Sigma_\infty$ contains in the northern hemisphere (and by symmetry in the southern too) an additional smooth rotationally symmetric minimal component $\Sigma'_\infty$ with boundary $A(r)$. This component either hits the boundary $\partial B$ or stays in the interior of $B$.  If it stays in the interior, then by the convex hull property of minimal surfaces, $\Sigma'_\infty$ is contained in $D$, and thus $\Sigma_\infty$ has differing integer multiplicities in the disk $D'$ bounded by $A(r)$ and its complement $D\setminus D'$ .  This violates the Constancy Theorem which states that a stationary varifold supported on a smooth surface is an integer multiple of the surface.  If instead $\Sigma'_\infty$ reaches $\partial B$, then because the critical catenoid is the unique rotationally symmetric free boundary surface aside from the disk, it follows that $\Sigma_\infty=D\cup C$.  

This completes the proof of Theorem \ref{main}.
\qed 

\begin{remark}
Assuming one knew that $\Sigma_g$ had three boundary components, then instead of using Proposition \ref{rigidity} to rule out the situation where $\Sigma_g\rightarrow D$ one could alternatively apply the following argument.  By Theorem 5.4 in Fraser-Schoen \cite{FS3}, since $\Sigma_g$ is a free boundary minimal surface, one has 
\begin{equation}\label{fsgreat}
|\partial\Sigma_g|=2|\Sigma_g|.  
\end{equation}
Thus if 
\begin{equation}
|\Sigma_g|\rightarrow\pi,
\end{equation} then by \eqref{fsgreat}
\begin{equation}\label{boundary}
|\partial\Sigma_g|\rightarrow 2\pi.
\end{equation}  

By assumption $\partial\Sigma_g$ consists of three circles.  If $\Sigma_g\rightarrow D$, then either the middle circle converges to $\partial D$ and the top and bottom ones vanish at the north and south poles or else the three circles converge to $\partial D$ with multiplicity $3$.  The first case is ruled out by applying the monotonicity formula to $\Sigma_g$ at points near the north or south pole.  The second case is ruled out by \eqref{boundary} as  $|\partial\Sigma_g|$ would be converging to $6\pi$ in this case.
\end{remark}

\section{Variational construction of the Fraser-Schoen examples}\label{fraserschoen}
In this section, we prove Theorem \ref{fs} which we restate 

\begin{theorem}
For each $k\geq 2$, a free boundary minimal surface isotopic to $F_k$ can be constructed variationally through a one-parameter equivariant min-max procedure.  
\end{theorem}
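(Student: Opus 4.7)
The plan is to follow the template established in the proof of Theorem \ref{main}, replacing the role of Almgren's isoperimetric theorem by the catenoid estimate of \cite{KMN}. Fix $k\geq 2$ and let $G$ be the symmetry group generated by rotations of angle $2\pi/k$ about the $z$-axis together with reflection across the $xy$-plane --- so that $F_k$ is $G$-invariant. First I would construct a one-parameter $G$-equivariant sweepout $\{\Sigma_t\}_{t=0}^{1}$ of $B$ whose generic slice is isotopic to $F_k$, namely two parallel graphs over $D$ joined by $k$ equally-spaced half-catenoidal necks meeting $\partial B$ orthogonally. The endpoint slices are chosen to degenerate: e.g. $\Sigma_0 = D$ (the two sheets coincide and the necks shrink to points) while $\Sigma_1$ collapses further to an arc along the $z$-axis, so that $\max(|\Sigma_0|,|\Sigma_1|) \leq \pi$.

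The central analytical ingredient is the catenoid estimate of \cite{KMN}, adapted to the free boundary setting via half-catenoids meeting $\partial B$ orthogonally. The estimate furnishes a particular element of the saturation $\Pi$ whose maximal slice has area strictly less than $2\pi$: as the two sheets separate from $D$, the necks' area is absorbed by the logarithmic profile of the half-catenoid, and combined with a radial pull-tight of the sheets this yields
\begin{equation}
W_k \;<\; 2|D| \;=\; 2\pi.
\end{equation}
For the non-triviality $W_k > \pi$, the sweepout is designed so that reflection across the $xy$-plane interchanges the two hemispheres of $B\setminus D$. An argument parallel to Proposition \ref{wd} then applies: were $W_k = \pi$, Almgren's isoperimetric theorem (Theorem \ref{almgreniso}) would force almost-maximal slices of some tightened sweepout to be $\mathbb{F}$-close to $D$, and the filigree/coarea construction of Proposition \ref{wd} contradicts the hemisphere swap.

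With $\pi < W_k < 2\pi$ in hand, Theorem \ref{eqminmax} produces a connected $G$-equivariant free boundary minimal surface $\Gamma$ with $W_k = n|\Gamma|$. Since $|\Gamma| \geq \pi$ by Volkmann's inequality \eqref{volk}, the upper bound forces $n = 1$, and the strict inequalities rule out $\Gamma = D$. To identify $\Gamma$ as isotopic to $F_k$, I would use Theorem \ref{eqminmax}(ii) to bound the genus from above by $0$ (since the sweepout slices have genus $0$), and $G$-equivariance to force the number of boundary components to be a positive multiple of $k$. Combined with the area bound $|\Gamma| < 2\pi$ and an enumeration of admissible $G$-equivariant surgeries (only ordinary neck-pinches and boundary half-neck-pinches are available, since the sweepout is disjoint from the singular set of $G$), the only possibility is the isotopy class of $F_k$.

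The main obstacle I expect is adapting the catenoid estimate of \cite{KMN} to the free boundary setting with $k$ half-necks attached simultaneously to $\partial B$, verifying that its logarithmic gain survives the boundary condition and that the $k$ necks do not interfere as $k$ grows. A secondary difficulty lies in the topological classification step: one must rule out that $\Gamma$ is, say, the critical catenoid (different number of boundary components) or a Fraser-Schoen surface $F_{k'}$ with $k' \neq k$ dividing $k$. This requires carefully tracking how $G$-equivariant surgeries act on boundary components modulo the rotational symmetry and invoking the area upper bound $W_k < 2\pi$ to exclude configurations with too many necks.
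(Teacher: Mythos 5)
Your high-level plan (construct a $G$-equivariant sweepout by genus-0 surfaces with $k$ ends, use the catenoid estimate of \cite{KMN} to get an area bound below $2\pi$, then apply the equivariant min-max theorem and rule out degenerations) matches the paper's skeleton, but two of your steps do not match and one of them is a genuine gap.

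First, your choice of symmetry group (rotations by $2\pi/k$ together with reflection across $D$) lies outside the paper's framework: the equivariant min-max theory of \cite{ketover2} invoked by Theorem~\ref{eqminmax} is stated for the rotation groups $\mathbb{Z}_n$, $\mathbb{D}_n$ and the Platonic groups, not for groups containing reflections. The paper uses $G=\mathbb{D}_k$ (rotations about the $z$-axis together with rotations of $\pi$ about lines in the $xy$-plane). Your version would require extending the equivariant min-max theory to reflection groups; this is not done in the paper and is not a small change.

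Second, and more seriously, your sweepout endpoint and your use of the catenoid estimate are both off. You degenerate $\Sigma_0$ to ``$D$'' (really to $2D$, since the two sheets coincide), which has area $2\pi$ and so directly conflicts with the catenoid-estimate upper bound $W_k<2\pi$ that you yourself need. You then propose to recover nontriviality by an Almgren-isoperimetric hemisphere-swap argument modeled on Proposition~\ref{wd}. The paper does neither of these things. Instead, the catenoid estimate is applied precisely to \emph{replace} the multiplicity-two disk endpoint by a one-dimensional graph $\mathcal{G}_2$ (the $k$ equally spaced diameters of $D$), using the retraction of $D\setminus\{p_i\}$ onto $\mathcal{G}_2$, while keeping all intermediate areas strictly below $2\pi$. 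Similarly, the other endpoint degenerates to a graph $\mathcal{G}_1$ of $k$ longitude arcs. With both endpoints being one-dimensional graphs of zero area, the nontriviality condition $W_k>\max(|\Sigma_0|,|\Sigma_1|)=0$ is automatic, and no isoperimetric argument is needed at all. Your description of the catenoid estimate as ``absorbing necks' area at $\partial B$'' via half-catenoids also misidentifies where the estimate is applied; the paper applies it in the interior of $D$ around the origin, not at the boundary necks.

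Finally, the topological classification is considerably simpler than you anticipate. Because the sweepout surfaces are disjoint from the singular set of $\mathbb{D}_k$, the only admissible surgeries are ordinary and boundary neck-pinches, and $G$-equivariance forces all $k$ necks to be pinched simultaneously; the only possible degeneration is into two disks, giving $2D$ with area $2\pi$, which is excluded by the catenoid estimate. The worries about the critical catenoid or $F_{k'}$ with $k'\mid k$ do not arise under this constraint.
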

\begin{proof}
Let $G=\mathbb{D}_{k}$.  We construct a $G$-invariant sweepout of $B$ as follows.   For $t\in (-1,1)$, denote
\begin{equation}
D_t = B\cap\{(x,y,z)\in\mathbb{R}^3\;|\; z = t\}.
\end{equation}
Note that
\begin{equation}\label{area}
|D_t| = \pi(1-t^2).
\end{equation}

For $t\in [0,1]$ consider the surfaces $\Sigma_t= D_t\cup D_{-t}$ (so that $\Sigma_0$ is the disk $D_0=D$ with multiplicity $2$).   Fix $k$ evenly spaced lines $L_i$ of longitude joining the north pole of $\partial B$ to the south pole.  Let $L_i(\epsilon)$ be the $\epsilon$-tubular neighborhood (in $\mathbb{R}^3$) of $L_i$ intersected with $B$ and let  $P_i(\epsilon)$ denote the part of the boundary of $L_i(\epsilon)$ contained in the interior of $B$.

Now for some function $f(t):[0,1]\rightarrow [0,\delta]$ consider the surfaces
\begin{equation}
\tilde{\Sigma}_i = \Sigma_t\cup\bigcup_i P_i(f(t))\setminus (\bigcup_i\Sigma_t\cap L_i(f(t)).
\end{equation}

In other words, we add to $\Sigma_t$ the half-tubes $P_i$ and remove the two half-disks in $\Sigma_t$ abutting the boundary of $B$.  By construction $\tilde{\Sigma}_t$ are $\mathbb{D}_k$-equivariant, have genus $0$ and $k$ ends as long as $f(t)>0$.

Choose $f(t)$ so that $f(t)\rightarrow 0$ as $t\rightarrow 1$.  In other words, as $t\rightarrow 1$, the necks dissappear and $\tilde{\Sigma}_t$ converges to the graph $\mathcal{G}_1$ consisting of the collection of arcs $L_i$.  Fix $\epsilon>0$ small and enforce for $f$ that $f(\epsilon)=0$ and also $f(x)>0$ for $x\in [\epsilon,1)$.  

Thus we have in light of \eqref{area} and the choice of $f$
\begin{equation}
|\tilde{\Sigma}_\epsilon|\leq 2|D|-2\pi \epsilon^2
\end{equation}
and in fact
\begin{equation}\label{goodarea}
\sup_{t\in [\epsilon,1]} |\tilde{\Sigma}_t| \leq 2|D| - C,
\end{equation}
\noindent
for some $C>0$.

For each $i$, let $p_i$ denote the point $L_i\cap D$.  Note that there is a retraction $R_t$ from $D\setminus\cup_i p_i$ onto the graph $\mathcal{G}_2$ consisting of $k$ equally spaced lines passing through the origin in $D$.   Thus applying the Catenoid Estimate \cite{KMN} and using \eqref{goodarea} one can
adjust the sweepout $\tilde{\Sigma}_t$ in the region $t\in [0,\epsilon]$ so that
\begin{equation}\label{areas}
\sup_{t\in [0,1]} |\tilde{\Sigma}_t| < 2|D|.
\end{equation}
and $\tilde{\Sigma}_t\rightarrow\mathcal{G}_2$ as $t\rightarrow 0$.

Applying Theorem \ref{eqminmax} to the family $\tilde{\Sigma}_t$ and the group $G$, we obtain a free boundary minimal surface $F_k$.  The only possible neckpinches would result in two disks, which would imply $F_k=2D$, contradicting \eqref{areas}.  Thus there are no neckpinches and one obtains free boundary minimal surfaces isotopic to those of \cite{FS}.
\end{proof}
\section{New free boundary minimal surfaces associated to the Platonic solids}\label{platonics}
In this section, we prove Theorem \ref{platonic}, which we restate:

\begin{theorem}
There exists a free boundary minimal surface in $B$  with octahedral symmetry of genus $0$ and $6$ ends, an example with tetrahedral symmetry of genus $0$ and $4$ ends, and an example of genus $0$ and $12$ ends of dodocahedral symmetry.
\end{theorem}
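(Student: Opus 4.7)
The plan is to prove each of the three cases of Theorem~\ref{platonic} by the template of the proof of Theorem~\ref{fs}, replacing the dihedral sweepout with one adapted to the relevant Platonic rotation group $G \in \{O_{24}, T_{12}, I_{60}\}$. First I would fix a $G$-equivariant one-dimensional graph $\mathcal{G}_G \subset \overline{B}$ whose tubular-neighborhood boundary, adjusted to meet $\partial B$ orthogonally, is a smooth $G$-equivariant genus-zero surface with the desired number of boundary circles: for $O_{24}$ take $\mathcal{G}_G$ to be the union of the three coordinate axes (six ends, as in the ``three-dimensional cross'' described in the introduction); for $T_{12}$ take $\mathcal{G}_G$ to be the union of four radial segments from $0$ to the vertices of an inscribed regular tetrahedron (four ends); for $I_{60}$ take $\mathcal{G}_G$ to be the union of the six diameters through opposite vertex pairs of an inscribed regular icosahedron (twelve ends).

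Next I would construct a $G$-equivariant sweepout $\{\tilde{\Sigma}_t\}_{t\in[0,1]}$ closely imitating the Fraser--Schoen sweepout built in the proof of Theorem~\ref{fs}: at $t=1$ shrink the tube radius to zero so that $\tilde{\Sigma}_t \to \mathcal{G}_G$; for $t \in (0,1)$ let $\tilde{\Sigma}_t$ be a smooth $G$-equivariant embedded genus-zero surface isotopic to the tubular-neighborhood boundary of $\mathcal{G}_G$; and for $t$ near $0$ let the sweepout pass through a desingularization of a $G$-invariant planar disk configuration and then apply the catenoid estimate of \cite{KMN} equivariantly along the segments of $\mathcal{G}_G$ to degenerate to a second $G$-equivariant graph $\mathcal{G}'_G$. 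The catenoid estimate delivers the strict area bound
\begin{equation}
\sup_{t \in [0,1]} |\tilde{\Sigma}_t| < N_G,
\end{equation}
where $N_G$ is the area of the chosen $G$-invariant planar disk configuration: $N_G = 3\pi$ for both $O_{24}$ (the three coordinate disks) and $T_{12}$ (the three disks perpendicular to the two-fold axes of the tetrahedral group), and $N_G = 6\pi$ for $I_{60}$ (the six disks perpendicular to the five-fold axes).

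Since $|\tilde{\Sigma}_0|=|\tilde{\Sigma}_1|=0$, the non-triviality condition \eqref{isnontrivial} of Theorem~\ref{eqminmax} reduces to $W^G_\Pi > 0$, which holds because $\mathcal{G}_G$ and $\mathcal{G}'_G$ are not $G$-equivariantly isotopic without passing through surfaces of positive area. Theorem~\ref{eqminmax} then produces a smooth connected $G$-equivariant free boundary minimal surface $\Gamma$ with $W^G_\Pi = n|\Gamma|$. Since none of $O_{24}, T_{12}, I_{60}$ preserves any plane through the origin (the stabilizer of any line through $0$ in each of these groups is a proper subgroup), the limit $\Gamma$ cannot be a flat disk of any multiplicity. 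Enumerating the admissible $G$-equivariant surgeries using Remark~\ref{neckpinches} as in the end of Section~\ref{con}, any such surgery replicated by the $G$-action yields either the desired tubular-neighborhood boundary of $\mathcal{G}_G$ or a $G$-invariant union of flat disks of total area at least $N_G$, the latter excluded by the strict area bound. Hence no nontrivial surgery occurs and $\Gamma$ is isotopic to the tubular-neighborhood boundary of $\mathcal{G}_G$, giving the stated genus and end count in each of the three cases.

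The main obstacle is the second step: for each Platonic group one must carry out an explicit $G$-equivariant construction of a sweepout interpolating between two distinct graphs through a desingularization of the $G$-invariant planar disk configuration, applying the catenoid estimate equivariantly along every axis of isotropy in $\mathcal{G}_G$ so as to secure the strict area bound $\sup_t|\tilde{\Sigma}_t| < N_G$. A secondary technical point in the topological analysis is ruling out the ``collapse of topology'' surgeries from Remark~\ref{throwing} along $\mathbb{Z}_2$-isotropy segments of $\mathcal{G}_G$; these are excluded by the genus-zero constraint inherited from every slice of the initial sweepout, combined with the observation (used already for $O_{24}, T_{12}, I_{60}$) that no single free boundary disk is invariant under the full Platonic group.
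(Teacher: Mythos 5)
Your proposal is more elaborate than, and genuinely different from, the paper's argument, and the extra machinery you import is exactly where the weak point lies. The paper's sweepout for the $O_{24}$ case is very simple and passes through no distinguished minimal configuration at all: $\Sigma_0$ is the one-dimensional graph $\mathcal{G}$ given by the three coordinate axes, for small $t$ one takes $\Sigma_t=\partial T_\epsilon(\mathcal{G})$, and one then extends $O_{24}$-equivariantly so that $\Sigma_1$ is the $1$-skeleton of the cubical tessellation of $\partial B$; both endpoint slices have zero area, the min-max theorem is applied, and one observes that since the min-max limit $\Gamma$ is connected (Frankel property) and $G$-equivariant, the only degenerations are into disks, but no single flat disk is invariant under any of $O_{24},T_{12},I_{60}$. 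There is no planar-disk-doubling configuration and no catenoid estimate anywhere in this proof. Your proposal instead grafts in the Fraser--Schoen template from Theorem \ref{fs}, inserting a $G$-invariant union of coordinate planes at an intermediate time and invoking the catenoid estimate along every isotropy axis to secure an area bound $<N_G$. This is the real gap: the catenoid estimate of \cite{KMN} is formulated for adding catenoidal necks between two nearby parallel sheets of a single minimal surface (a ``doubling''), and it is not at all clear how to deploy it ``equivariantly along every axis of isotropy'' at the singular circle where three or six mutually transverse flat disks cross. You flag this yourself as ``the main obstacle,'' and indeed it is a substantial piece of work that the paper's construction sidesteps entirely. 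The area bound $<N_G$ that this would buy is also not needed: your final paragraph already contains the paper's actual argument (no single disk is $G$-invariant, so $\Gamma$ is not a disk), which suffices once one knows $\Gamma$ is connected, and the bound $<N_G$ plays no further logical role. If you drop the disk-doubling step and the catenoid estimate and simply close the sweepout off at a degenerate boundary graph on $\partial B$, your proposal collapses to the paper's.
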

\begin{proof}
Let us produce the surface of genus $0$ and $6$ ends, as the others follow analogously.  Consider $O_{24}$  (the symmetry group of a cube) acting on $B$.   We can construct a $O_{24}$-invariant sweepout $\Sigma_t$ of $B$ as follows.   Let $\Sigma_0$ be the graph $\mathcal{G}$ consisting of the $x$-axis, $y$-axis, and $z$-axis restricted to $B$.  For $t$ small, set $\Sigma_t = \partial T_\epsilon(\mathcal{G})$.  One can extend this sweepout $O_{24}$-equivariantly to the rest of $B$ so that $\Sigma_1$ consists of the tessellation of $\partial B$ by six squares.  

Applying Theorem \ref{eqminmax} to the sweepout $\Sigma_t$ with $G=O_{24}$ one obtains an embedded connected free boundary minimal surface $\Gamma$.  It is easy to see that the only possible degeneration is into several disjoint disks.  But no disk is invariant under $O_{24}$.  It follows that $\Gamma$ is a free boundary minimal surface with six ends and genus zero, resembling a three-dimensional ``cross."
\end{proof}
\section{Proof of the Min-Max Theorem \ref{eqminmax}}\label{proof}
We will assume the reader is familiar with the min-max construction of Simon-Smith \cite{ss} and only focus on the changes necessary from the standard arguments.  For an exposition of the theory see for instance Colding-De Lellis \cite{cd}, or \cite{ketover} for the control on the genus of the limiting minimal surface.  In Section 2 of \cite{ketover2} is a detailed account of the changes needed in the equivariant setting.  De-Lellis-Ramic provide a detailed account of the min-max theory in the free boundary setting (the ``unconstrained problem") which includes a proof of the regularity of free boundary minimal surfaces at their free boundary.

Note that the ``boundary" considered here is not entirely ``free."  Consider for instance the situation of Theorem \ref{main} where the sweepout surfaces contain an axis of the singular set $\mathbb{Z}_2$ and so cannot move off these lines.  Thus the boundary is constrained to contain the points that are the intersections of these lines with $\partial B$.  But this is not really a problem because of the Schwarz reflection principle: if a varifold is stationary with respect to variations preserving a line in the surface, it is stationary with respect to all variations (cf. Lemma 3.8 in \cite{ketover2}).

\subsection{Existence of a free boundary equivariant stationary varifold $\mathcal{V}$}
\indent
Given a vector field $\chi$ defined on $\overline{B}$, denote by $\phi_\chi$ the family of isotopies obtained by integrating $\chi$.  Let us consider the family of vector fields $\chi$ so that the corresponding isotopy $\phi_\chi(t):\overline{B}\rightarrow\overline{B}$ satisfies
\begin{enumerate}
\item For all $t\in [0,1]$, $\phi_t$ preserves $\partial B$. 
\item $g\phi_t(x)=\phi_t(gx)$ for all $t$, $g\in G$ and $x\in\overline{B}$.
\end{enumerate}
Denote by $Is_{FB}^G$ the set of vector fields $\chi$ so that $\phi_\chi$ satisfies items (1) and (2).  The set $Is_{FB}^G$ consists of the \emph{free boundary equivariant vector fields}.  A varifold $\mathcal{W}\subset B$ is called equivariant if $g_{\#}(\mathcal{W})=\mathcal{W}$ for all $g\in G$.  

An equivariant varifold $\mathcal{W}$ is called a \emph{free boundary equivariantly stationary varifold} if $\delta_V(\mathcal{W})=0$ for all vector fields $V\in Is_{FB}^G$.   Note that such $V$ are tangent to $\partial B$ by definition.  

As observed in \cite{ketover2}, the set of vector fields $Is_{FB}^G$ is a convex vector space.  Thus the ``pull-tight" procedure (Proposition 3.2 in \cite{DR}) applies to produce a free boundary equivariantly stationary varifold $\mathcal{V}$.  By Lemma 3.8 in \cite{ketover2}, a free boundary equivariantly stationary varifold is in fact stationary.  Thus we produce an equivariant, free boundary stationary varifold $\mathcal{V}$ with mass equal to $W^G_\Pi$.

\subsection{Regularity of $\mathcal{V}$}
As in Section 4.2 in \cite{ketover2} and Proposition 4.3 in \cite{DR}, one can find a min-max sequence $\Sigma_j$ that is almost minimizing in annuli small enough.  Precisely, for each $x\in\overline{B}$, there is a radius $r(x)>0$ so that for any annulus $An$ about $x$ of outer radius at most $r(x)$, $\Sigma_j$ is $1/j$ $G$-almost minimizing in $An$.  In other words, $\Sigma_j$ is almost minimizing but only among variations through $G$-equivariant deformations.  

We must construct smooth replacements for $\mathcal{V}$ in such annuli.  There are several cases: 
\begin{enumerate}
\item $x\notin\mathcal{S}$ 
\item $x\in\mathcal{S}$ and $\Sigma_t$ do not contain the arc $\mathcal{A}$ containing $x$ 
\item $x\in\mathcal{S}$ and $\Sigma_t$ \emph{does} contain the arc $\mathcal{A}$ containing $x$ 
\end{enumerate}

We can further subdivide case (1) (2) and (3) into subcase a) if $x$ is in the interior of $B$ and b) if $x$ is in the boundary of $B$.

Case (1ab) and (2a) follow from previous work on free boundary problems (see for instance \cite{GJ}) and \cite{ketover}. Let us first consider (3ab).

Let us first recall the following definitions (Section 4 in \cite{ketover2}):

\begin{definition}\normalfont
Let  $\Sigma$ be a smooth $G$-equivariant surface contained in a $G$-ball.  Choose a normal vector field $n$ on $\Sigma$.  Let us call a smooth function $\phi$ defined on $\Sigma$ an {\it equivariant deformation} if for all $t$ small enough, the following set is $G$-equivariant:
\begin{equation}\label{eqdef}
\Sigma_{t\phi} = \{\exp_p(n(p)t\phi(p)) \;|\; p\in \Sigma\}.
\end{equation}
In other words, $\phi$ is an equivariant deformation if moving normally to $\Sigma$ according to $\phi$ gives rise to $G$-equivariant surfaces.  
\end{definition}

\begin{definition}
A $G$-equivariant surface $\Sigma$ is \emph{$G$-stable} if it is stable among equivariant deformations, i.e., 
\begin{equation}
\frac{d^2}{dt^2}\Big|_{t=0} \mathcal{H}^2(\Sigma+nt\phi) \geq 0,
\end{equation}
for all equivariant deformations $\phi$.
\end{definition}


Given a point $p$ in case (3a), one can minimize among $G$-equivariant $1/j$-isotopies in a fixed annulus $An$ based about $p$ to produce a $\mathbb{Z}_2$-stable surface $V_j$ which is a replacement for $\Sigma_j$ in $An$.  Assume for the moment that $V_j$ is smooth.

If $V_j$ intersects $\mathcal{S}$ transversally (as in case (2)), it was proved in Section 4 in \cite{ketover2} that $G$-stability implies stability.  
In case (3a) however,  $V_j$ contains an arc $\mathcal{A}\subset\mathcal{S}$.  In this setting, $\mathbb{Z}_2$-stability may not be equivalent to stability.   It is easy to see that $V_j$ is stable among odd deformations, but it may not be stable among even deformations.  Given any fundamental domain $\mathcal{F}$ of the $\mathbb{Z}_2$ action, any compactly supported variation $\chi$ supported in $\mathcal{F}$ extends by $\chi(\tau(x)):=-\chi(x)$ to give an odd variation of $An$.  Thus $V_j$ is stable in any such $\mathcal{F}$. By curvature estimates for stable surface \cite{schoen}, this implies that 
\begin{equation}\label{schoenaxis}
|A|^2(x)\leq C\mbox{dist}(x,(\mathcal{A}\cap An)\cup\partial An)^{-2}.
\end{equation}
In other words, the curvature of $V_j$ is bounded in the interior of $An$ but may blow up as one approaches the axis $\mathcal{A}$.

On the other hand, since $\Sigma_j$ has bounded genus and area, it follows that $V_j$ have bounded genus and area (see for instance Proposition 4.7 in \cite{ketover}).  By the integrated Gauss-Bonnet argument of Ilmanen (Lemma 1 in Lecture 3 in  \cite{I}), in any proper subannulus $An'\subset An$, there holds
\begin{equation}\label{bounded}
\sup_j \int_{V_j\cap An'} |A|_{V_j}^2\leq C(An'), 
\end{equation}
where $C(An')$ is a constant depending on $An'$ which blows up as $An'$ approaches $An$.  Note that one cannot expect any curvature bound up to the boundary of $An$.  Consider for instance the Fraser-Schoen free boundary minimal surfaces $F_k$ in $B$.  They have genus $0$ and bounded areas, but do not converge smoothly to $2D$ over $\partial B$.

Using \eqref{bounded}, in any such $An'$, $V_j$ has a convergent subsequence by classical results due to Choi-Schoen \cite{CS}.  Taking a sequence of annuli $An'\rightarrow An$ and a diagonal argument, one can produce a subsequence of $V_j$ converging to a smooth $\mathbb{Z}_2$-stable minimal surface $V_\infty$ in compact subsets of $An$.  Note that the convergence may not be smooth in any given subannulus, $An'$.  There may be finitely many points where the convergence fails to be smooth.  Note also that the convergence $V_j\rightarrow V_\infty$ is smooth in the interior of $An$ away from the axis $\mathcal{A}$ by \eqref{schoenaxis}. 

Thus we have replaced $\mathcal{V}$ in $An$ by a smooth minimal surface $V_\infty$.  Moreover, the replacement is stationary over $\partial(An)$ as in Proposition 7.5 in \cite{cd} and thus gives a replacement for $\mathcal{V}$.  

In Section 6 of \cite{cd}, the fact that $V$ has local smooth replacements is used to prove the regularity of $\mathcal{V}$.  One can peruse the proof of Theorem 6.3 in \cite{cd} to see that the curvature estimates of Schoen's are not needed at all.  Colding-De Lellis explicitly point this out in Section 2.5: ``In fact what we will use is not the actual curvature estimate, rather it is the following consequence of it:  a sequence of stable minimal surfaces has a convergent subequence."   We  replace this compactness theorem of Schoen \cite{schoen} with the fact that $\mathbb{Z}_2$-stable surfaces with bounded area and genus subconverge smoothly away from finitely many points concentrating along the axis $\mathcal{A}$.  This then implies regularity of $\mathcal{V}$.

It remains to prove that one can minimize among $G$-equivariant $1/j$-isotopies in $An$ to produce the smooth $\mathbb{Z}_2$-stable replacements $V_j$.

As observed in Section 4.3 in \cite{ketover2}, by a Squeezing Lemma, to prove regularity of the replacements $V_j$ it is enough to prove the following Proposition \ref{minimizing}, which states that one can minimize area restricting to equivariant isotopies.  The only difference here from Section 4.3 in \cite{ketover2} is that one of the curves in the boundary is not acted upon freely by $\mathbb{Z}_2$.  Some of the proof is identical to Proposition 4.14 in \cite{ketover2}.   The added difficulty is that here it is does not follow in the same way as in \cite{ketover2} that area minimizing disks are equivariant (though I do not know an explicit example where this fails).

\begin{proposition}\label{minimizing}
Suppose $\mathbb{Z}_2$ acts on a $3$-ball $B$ as a rotation of $180^o$ about the line $\mathcal{S}$.  Let $\{\gamma_i\}_{i=1}^k$ be a collection of Jordan curves in $\partial B$ bounding a $\mathbb{Z}_2$-equivariant surface $\Sigma\subset B$ of genus $0$ and so that $\mathbb{Z}_2$ acts freely on the curves $\{\gamma_i\}_{i=2}^k$ but non-freely on $\gamma_1$ (and thus $\gamma_1$ contains the two points of $\mathcal{S}\cap\partial B$).  Consider a minimizing sequence $\Sigma_i$ for area among surfaces that are contained in $B$ and $\mathbb{Z}_2$-isotopic to $\Sigma$. Then after passing to a subsequence (not relabeled) $\Sigma_i$ converges with multiplicity $1$ to a smooth embedded $\mathbb{Z}_2$-equivariant minimal surface $V$ with boundary $\{\gamma_i\}_{i=1}^k$ and genus $0$.
\end{proposition}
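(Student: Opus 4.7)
The plan is to follow the equivariant Meeks--Simon--Yau approach of Proposition 4.14 in \cite{ketover2}, adapting the replacement step to the new feature that $\gamma_1$ contains the two fixed points of the $\mathbb{Z}_2$-action. From the minimizing sequence $\Sigma_i$, standard compactness for integer rectifiable varifolds of bounded area gives a subsequence converging to a $\mathbb{Z}_2$-equivariant varifold limit $V$ with $|V|\leq \liminf_i |\Sigma_i|$. The minimizing property among equivariant isotopies shows that $V$ is stationary with respect to all vector fields in $\mathrm{Is}_{FB}^{\mathbb{Z}_2}$, and Lemma 3.8 in \cite{ketover2} upgrades this to free-boundary stationarity with respect to arbitrary vector fields tangent to $\partial B$.

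For regularity away from $\mathcal{S}$, I use the classical Meeks--Simon--Yau replacement procedure: at a point $p\in \overline{B}\setminus \mathcal{S}$, the sets $B_\rho(p)$ and $\tau(B_\rho(p))$ are disjoint for small $\rho$, and replacing $\Sigma_i\cap B_\rho(p)$ by its area-minimizing competitor (and simultaneously in the $\tau$-image) preserves equivariance and yields smooth, embedded, multiplicity-one minimal replacements. At an interior point $p\in \mathcal{S}\setminus\partial B$, the same procedure applied in a fundamental domain gives smoothness away from $\mathcal{S}$, and Schwarz reflection extends this across $\mathcal{S}$ using equivariance and stationarity of $V$.

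The main obstacle is regularity at a fixed point $p\in \mathcal{S}\cap\gamma_1\subset\partial B$. In \cite{ketover2}, equivariance of the MSY replacement disks was deduced from uniqueness of the unconstrained area-minimizer with prescribed boundary, but this uniqueness can fail here because $\gamma_1$ is only setwise, not pointwise, preserved by $\tau$ near $p$. The plan is to sidestep this by performing the replacement within the class of $\mathbb{Z}_2$-equivariant disks spanning the given $\tau$-invariant boundary: such a restricted minimizer exists by direct methods in the space of equivariant integral currents, and its restriction to a fundamental domain solves a mixed free-boundary Plateau problem with smooth obstacle-free boundary data, hence is smooth and minimal away from $\mathcal{S}$ by classical theory. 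Equivariance and stationarity then propagate smoothness across $\mathcal{S}$ by Schwarz reflection. This smooth equivariant replacement suffices for the Squeezing Lemma step of \cite{ketover2}, because the interior curvature bound \eqref{schoenaxis} and the total-curvature estimate \eqref{bounded} (inherited from the genus-zero assumption) yield subsequential smooth convergence on compact subsets of $B_\rho(p)\setminus \mathcal{S}$. Free-boundary regularity along the arc $\gamma_1\cap B_\rho(p)$ itself is handled as in \cite{DR}.

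Finally, the topology is controlled as follows. Smooth multiplicity-one convergence $\Sigma_i\to V$ on compact subsets of $\overline{B}\setminus\mathcal{S}$, together with the equivariant-disk structure of the replacements near $\mathcal{S}$, prevents any concentration of genus, so $V$ has genus $0$. Since the boundary curves $\{\gamma_i\}$ are held fixed throughout the isotopy and each $\Sigma_i$ is embedded, it follows that $\partial V=\cup_i \gamma_i$ and that $V$ is embedded of multiplicity one, completing the proof.
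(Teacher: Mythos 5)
You correctly identify the central difficulty -- that near a fixed point $p\in\mathcal{S}\cap\gamma_1$ the unrestricted area-minimizing replacement disk need not be $\mathbb{Z}_2$-equivariant because $\gamma_1$ is only setwise invariant -- but your proposed remedy of simply minimizing among $\mathbb{Z}_2$-equivariant competitors leaves two genuine gaps.

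First, and most seriously, you lose the disjointness (hence embeddedness and multiplicity one) of the replaced pieces. In the standard Meeks--Simon--Yau exchange argument, disjointness of the replacement disks from each other is obtained by the cut-and-paste trick, which relies crucially on each competitor being an unconstrained area minimizer for its own boundary curve. Your equivariantly constrained minimizer on the axis disk is, by design, not an unconstrained minimizer, so Meeks--Yau gives you no control over how it sits relative to the other, unconstrained replacement disks $\cup_{k\neq k'}A^j_k$; the cut-and-paste step that should establish disjointness simply is not available. The paper's proof circumvents this by a quite different device: it takes the \emph{unconstrained} minimizer $S$ for the axis boundary together with its image $\tau(S)$, observes that both are unconstrained minimizers and hence disjoint from each other and from every $A^j_k$ by Meeks--Yau, and only then performs the constrained equivariant minimization \emph{inside the region $R$ bounded between $S$ and $\tau(S)$}, which is automatically disjoint from all the other disks. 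This intermediate construction is what makes the equivariant replacement compatible with embeddedness, and it is absent from your proposal.

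Second, the assertion that the equivariant minimizer restricted to a fundamental domain "solves a mixed free-boundary Plateau problem... hence is smooth... by classical theory," with smoothness across $\mathcal{S}$ then following from Schwarz reflection, is too quick. The minimizer among $\mathbb{Z}_2$-isotopies is a varifold limit that could a priori have a singular tangent cone at points of $\mathcal{S}$ (for instance several half-planes meeting along the axis, possibly with multiplicity). Ruling this out requires an actual tangent-cone and desingularization analysis in the spirit of Almgren--Simon and De Lellis--Pellandini, together with a "folding" argument to eliminate cones consisting of half-disks clustered in one hemisphere; this is exactly the content of the paper's separate Lemma~\ref{eqmin}. Schwarz reflection only applies once smoothness up to $\mathcal{S}$ has been established, which is the very thing that needs to be proved. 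Your proposal treats this as routine, whereas it is where the bulk of the technical work lies.
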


\begin{proof}
Regularity of $V$ away from the axis $\mathcal{S}$ follows from the replacement theory \cite{pitts} and \cite{msy}.  It remains to prove that $V$ extends smoothly over $\mathcal{S}$. To that end, we can fix a small ball $N$ centered about $\mathcal{S}$. As in Proposition 4.14 in \cite{ketover2}, Steps 1) and 2), we can perform finitely many neckpinches so that the surgered sequence $\Sigma_i$ still converges to $V$ and moreover, we can replace $\Sigma_i$ so that it consists of disks $\cup_k D^j_k$ inside $N$.   Let $D_{k'}^j$ denote the unique disk that in $N$ with containing the singular axis.  We then can replace each disk in the collection of disks $(\cup_k D^j_k)\setminus D_{k'}^j$ with the area minimizing disk $\cup_{k\neq k'} A^j_k$ in $N$ with the same boundary in $\partial N$.  By the Meeks-Yau cut-and-paste argument \cite{MY} the disks in the collection $\cup_{k\neq k'} A^j_k$ are pairwise disjoint (since their boundaries are) and are moreover minimizers among all $\mathbb{Z}_2$ isotopies.  The disk $D^j_{k'}$ is problematic since it is not clear that the area-minimizer is $\mathbb{Z}_2$-equivariant.  If it is, then we can replace $D^j_{k'}$ with the area-minimizer, which again by Meeks-Yau is disjoint from the disks $\cup_{k\neq k'} A^j_k$.  In that case, taking $j\rightarrow\infty$ we produce a replacement for the varifold $V$ in $N$, which implies regularity as in \cite{pitts} \cite{cd}.

Suppose instead the area minimizer $S$ for disks with boundary $\gamma_1$ in $N$ is not $\mathbb{Z}_2$-equivariant.  Then consider $\tau(S)$, where $\tau$ generates the $\mathbb{Z}_2$ action.  By the Meeks-Yau argument, $\tau(S)$ and $S$ are disjoint except for their boundaries which coincide.  Since $S$ and $\tau(S)$ are area minimizing, it follows by Meeks-Yau again that $\cup_{k\neq k'} A^j_k$ is disjoint from $\tau(S)$ and $S$ and thus avoids entirely the $3$-ball $R$ in $N$ bounded between $\tau(S)$ and $S$.  By Lemma \ref{eqmin} below we can minimize area for $D^j_{k'}$ in the region $R$ among $\mathbb{Z}_2$-equivariant isotopies to produce a $\mathbb{Z}_2$-equivariant minimal disk $A^j_{k'}$ in $R$ (though potentially not stable among all variations).  By Schoen-Simon \cite{schoensimon} it follows that simply connected minimal surfaces satisfy curvature estimates.  Thus $A^j_{k'}$ still has a convergent subsequence as $j\rightarrow\infty$.  The limit in $j$ of this disk together with the limits of $\cup_{k\neq k'} A^j_k$ gives a smooth replacement for $V$ in $N$, and thus establishes regularity. 
\end{proof}
\begin{lemma}\label{eqmin}
Let $N$ be a domain in $\mathbb{R}^3$ with boundary consisting of two minimal disks $D_1$, $D_2$.  Assume $N$ is invariant under a rotation $\tau$ of $180^o$ that interchanges $D_1$ and $D_2$.  Let $\mathcal{S}$ denote the singular set of the $\mathbb{Z}_2$ action which intersects $\partial D_1=\partial D_2$ in two points and is contained in $N$.  Let $\gamma$ be the $\mathbb{Z}_2$-equivariant closed curve given by $\partial D_1=\partial D_2$ bounding a disk $\Sigma$ in $N$ that contains $\mathcal{S}$.  Then one can minimize area for $\Sigma$ restricting to $\mathbb{Z}_2$-equivariant isotopies in $N$ to produce a $\mathbb{Z}_2$-stable minimal disk $\Sigma_\infty$ containing $\mathcal{S}$ with boundary $\gamma$.  
\end{lemma}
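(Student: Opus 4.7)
The plan is to reduce the $\mathbb{Z}_2$-equivariant Plateau problem in $N$ to a classical Plateau problem in the quotient orbifold $\bar N := N/\mathbb{Z}_2$ and lift the minimizer back.

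First, I take a minimizing sequence $\Sigma_j$ of $\mathbb{Z}_2$-equivariant disks isotopic to $\Sigma$ through $\mathbb{Z}_2$-equivariant ambient isotopies; each such isotopy preserves the fixed set $\mathcal{S}$, so every $\Sigma_j$ contains $\mathcal{S}$. The quotient $\bar N$ is topologically a $3$-ball with an interior arc $\bar{\mathcal{S}} := \pi(\mathcal{S})$ of cone-angle-$\pi$ metric singularities joining two orbifold vertices $\bar p_1, \bar p_2 \in \partial \bar N$. The image $\bar\Sigma_j := \pi(\Sigma_j)$ is a topological disk in $\bar N$ whose boundary is the loop $\bar\gamma \cup \bar{\mathcal{S}}$, where $\bar\gamma := \pi(\gamma)$ is an arc on $\partial \bar N$ joining $\bar p_1$ to $\bar p_2$, and the equivariant isotopies upstairs descend to ordinary isotopies of $\bar\Sigma_j$ in $\bar N$ that fix this boundary.

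Next, I apply the classical Meeks--Yau $\gamma$-minimization to $\bar\Sigma_j$ in the smooth part $\bar N \setminus \bar{\mathcal{S}}$: on a subsequence, $\bar\Sigma_j$ converges to a smooth embedded minimal disk $\bar\Sigma_\infty$ of multiplicity one away from $\bar{\mathcal{S}}$. Lifting via $\pi$, the set $\Sigma_\infty := \pi^{-1}(\bar\Sigma_\infty)$ is a $\mathbb{Z}_2$-equivariant integer-rectifiable stationary varifold in $N$, smooth in $N \setminus \mathcal{S}$ and with boundary $\gamma$. Being a limit of $\mathbb{Z}_2$-equivariant isotopies of $\Sigma$, it is stationary with respect to all $\mathbb{Z}_2$-equivariant variations, hence stationary with respect to all variations by Schwarz reflection (Lemma 3.8 of \cite{ketover2}).

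The main obstacle is proving smoothness of $\Sigma_\infty$ across $\mathcal{S}$. Near any $p \in \mathcal{S}$, $\Sigma_\infty$ has density one (the two half-disks of $\Sigma_\infty$ on either side of $\mathcal{S}$ are the lifts of the single half-disk of $\bar\Sigma_\infty$ near $\bar p$, and together they form a multiplicity-one surface transverse to $\mathcal{S}$), and is $\mathbb{Z}_2$-symmetric about $\mathcal{S}$. Any tangent cone at $p$ is therefore a $\mathbb{Z}_2$-invariant stationary integral cone of density one containing the line $\mathcal{S}$; by the classification of planar stationary cones (via the constancy theorem along $\mathcal{S}$ and the unique continuation principle) it must be a single multiplicity-one plane through $\mathcal{S}$. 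Allard's $\epsilon$-regularity theorem then yields smoothness of $\Sigma_\infty$ near $p$. Finally, $\mathbb{Z}_2$-stability of $\Sigma_\infty$ is inherited from its area-minimization property in the equivariant isotopy class, and the topological type (a disk containing $\mathcal{S}$ with boundary $\gamma$) is preserved by the Meeks--Yau minimization.
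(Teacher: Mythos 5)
The quotient approach is an attractive idea, but it does not actually bypass the crux of the lemma, and the step where you conclude ``density one near $\mathcal{S}$'' is circular.

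You apply Meeks--Yau minimization only on the smooth part $\bar N\setminus\bar{\mathcal{S}}$, and this gives you a smooth embedded multiplicity-one minimal disk $\bar\Sigma_\infty$ \emph{away from} $\bar{\mathcal{S}}$. But the parenthetical justification for the density-one claim (``the two half-disks of $\Sigma_\infty$ on either side of $\mathcal{S}$ are the lifts of the single half-disk of $\bar\Sigma_\infty$ near $\bar p$'') already presumes that $\bar\Sigma_\infty$ consists of a single multiplicity-one sheet up to the arc $\bar{\mathcal{S}}$ --- but that is exactly what is at issue. In the quotient, the boundary of the competitors is the concatenation $\bar\gamma\cup\bar{\mathcal{S}}$ where $\bar{\mathcal{S}}$ runs through the \emph{interior} of $\bar N$ along a cone-angle-$\pi$ singularity of the metric. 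This is not the classical Plateau setting of Meeks--Yau, and their regularity theory says nothing about the behavior of the limit as you approach $\bar{\mathcal{S}}$. The minimizing sequence $\bar\Sigma_j$ could \emph{a priori} develop several sheets accumulating on $\bar{\mathcal{S}}$ (upstairs, $\Sigma_\infty$ having a tangent cone along $\mathcal{S}$ that is a union of many half-planes, not just two). Your appeal to ``the constancy theorem along $\mathcal{S}$ and the unique continuation principle'' does not rule this out: constancy applies to varifolds supported on a connected smooth manifold and the tangent cone along $\mathcal{S}$ need not be a single plane.

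This is precisely the difficulty the paper's proof confronts head on. There, one rescales at a point of $\mathcal{S}$, shows (using bounded genus and interior regularity) that the tangent cone is a union of finitely many half-disks $\cup_k P_k$ meeting along the axis, and then decomposes the rescaled minimizing sequence into near-minimizing disks as in De Lellis--Pellandini (Section~8) and Almgren--Simon. The $\mathbb{Z}_2$-equivariance forces the distinguished disk containing $\mathcal{S}$ to converge to a pair of half-planes $P_1\cup P_2$ forming a full plane, and the remaining near-minimizing disks are shown not to converge to a union of half-planes confined to one hemisphere because such a limit would be ``folding'' along a line, which is incompatible with near-minimality. Some replacement for this folding argument is necessary in any proof of the lemma; your proposal omits it, so there is a genuine gap at the key step.
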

\begin{proof}
It follows as before that the minimizing sequence among $\mathbb{Z}_2$ isotopies converges to a smooth minimal disk away from $\mathcal{S}$. Since each half of $\Sigma_\infty$ contains the line $\mathcal{S}$, by Schwarz reflection, $\Sigma_\infty$ is a smooth surface, embedded away from $\mathcal{S}$, but potentially containing points of self-intersection at $\mathcal{S}$.  Thus we need only show that any tangent cone of $\Sigma_\infty$ at $\mathcal{S}$ consists of a plane with some multiplicity.  

The proof of this fact is essentially contained in Theorem 3 in \cite{almgrensimon}  and Section 8 in \cite{dp} so we merely sketch the argument.  Consider a sequence of dilations $\lambda_j\rightarrow\infty$ and the sequence of rescaled surfaces $\tilde{\Sigma}_j:=\lambda_i(\Sigma_j-x)$ approaching a tangent cone $C$ at $x$.  Let us restrict attention to the sequence $\tilde{\Sigma}_j$ contained in the unit ball $\mathcal{B}_1$ in $\mathbb{R}^3$. The cone $C$ in $\mathcal{B}_1$ consists of several half disks $\cup_k P_k$ meeting along the $z$-axis.  This follows as the surface $\Sigma_\infty$ has bounded genus and is smooth away from the axis $\mathcal{S}$ and thus a tangent cone at $\mathcal{S}$ cannot acquire any singular points beyond $\mathcal{S}$ itself.  By a desingularization procedure (Section 8.3 in \cite{dp}), one can assume $\tilde{\Sigma}_j\cap\mathcal{B}_1$ consists of several disks $\cup_i D^j_i$, and the area of each disk is very close to a minimal area disk with the same boundary values.  One of these disks $D^j_1$ is the one containing $\mathcal{S}\cap\mathcal{B}_1$, and by the $\mathbb{Z}_2$-equivariance this disk converges to a union of two half-disks, say $P_1\cup P_2$, where $P_2$ is the half disk obtained by rotating $P_1$ by $180^o$.  The other disks $\cup_{i\geq 2} D^j_i$ either have no limit in $C$, or secondly converge to the flat disk $P_1\cup P_2$ or thirdly can converge to the union of several of the half-disks $P_i$ that are contained in one of the hemispheres $\mathcal{B}\setminus(P_1\cup P_2)$.    This third possibility cannot occur because such disks would be ``folding" along a line and thus be very far in area from the infimal area of disks with their boundary values.  See Section 8.5 in \cite{dp} for a demonstration of this, or the proof of Theorem 3 in \cite{almgrensimon}.
\end{proof}

As for cases (2b), and (3b), the changes necessary are minor.  Fix $x\in\partial B$ and an annulus $An$ centered about $x$ for case (2b) or (3b).  Then one can minimize appropriate $1/j$-isotopies (allowing the boundary of $\Sigma_j$ in $\partial B$ to move) as in Section 9 in \cite{DR} to obtain a smooth free boundary minimal surface $V_j$ in $An$.  In case (2b) one obtains a $\mathbb{Z}_n$-stable free boundary minimal surface $V_j$ in $An$.  The surface produced is in fact stable among all variations for the free boundary problem by Proposition 4.6 in \cite{ketover2}.   Thus one can apply the curvature estimates for such surfaces (Theorem 7.3 in \cite{DR}) to obtain a smooth replacement in $An$. In case (3b) one obtains a $\mathbb{Z}_2$-stable free boundary minimal surface. Since $\mathbb{Z}_2$-stability may not imply stability in this setting, one can appeal to the arguments of case (3a) to obtain nevertheless a convergent subsequence away from finitely many points in $An$.

\subsection{Completion of proof of Theorem \ref{eqminmax}}
Since the regularity of $\mathcal{V}$ has been established, the other claims Theorem \ref{eqminmax}ii, iii, iv and v. follow easily.   The proofs of iv. and v. are identical to the arguments in Section 5 of \cite{ketover2}.  Let us show iii.

In this case, the min-max sequence $\Sigma_i$ contains the singular axis $\mathcal{A}$ with $\mathbb{Z}_2$ isotropy. 
Let us first show that $\Gamma$ also contains $\mathcal{A}$.  Fix a point $x\in\mathcal{A}$ and a ball $B$ about $x$ so that $B\cap\mathcal{S}=\mathcal{A}'$ and $\Sigma_i$ intersects $\partial B$ transversally.  For each $i$, there is a distinguished circle $C_i$ contained in $\partial B$ that contains the two points $N$ and $S$ of intersection of $\mathcal{A}'$ with $\partial B$.   The circle $C_i$ is comprised of two arcs $A_i$ and $B_i$, each starting at $N$ and ending at $S$ and being interchanged by the $\mathbb{Z}_2$ action $\tau$.  Consider the piece $P_i$ of the surface $\Sigma_i$ in $B$ bounded by the closed piecewise smooth curve $\mathcal{A}'\cup A_i$.  Since this curve is evidently bounded away from zero in the flat topology as $i\rightarrow\infty$, it cannot happen that the area of $P_i$ tends to zero as $i\rightarrow\infty$.  Thus the varifold limit of $P_i$ is contained in the min-max minimal surface $\Gamma$.  Since each $P_i$ contains $\mathcal{A}$, so does $\Gamma$.

Finally let us also show the further statement in iii) that when $\Sigma_t$ contains $\mathcal{A}$, the multiplicity $n$ of $\Gamma$ is an odd integer.  Fix again a ball $B$ centered about a point $p\in\mathcal{A}$.  Taking replacement for $\Sigma_i$ in $B$, one obtains a $\mathbb{Z}_2$-stable surface $V_j$.  We have seen that $V_j$ converges to $\mathcal{V}$ smoothly away from finitely many points.  Choose a point $p\in B$ centered around the axis $\mathcal{A}$ and a ball $B_1$ centered around it that avoid these finitely many points.  Then in $B_1$, $V_j$ consists of $m$ graphs $f^j_1,..., f^j_m$ each converging smoothly to $\mathcal{V}$ (as $j\rightarrow\infty$).  Precisely one of the graphs $f^j_s$ among the $f^j_1,...f^j_m$ contains the axis $\mathcal{A}$ in $B_1$.  The graph $f^j_s$ is preserved by the involution $\tau$ generating $\mathbb{Z}_2$.  The other graphs, being disjoint from the singular axis, must be swapped one with another.  It follows that $m$ is odd.  Thus iii. is established.

The argument for ii) follows with straightforward modifications of the proof of the Improved Lifting Lemma from the arguments of \cite{ketover}, \cite{ketover2} and we omit it.  

\qed


\begin{thebibliography}{CaGo}
\bibitem[Ai]{Ai}
N. Aiex, \emph{Non-compactness of the space of minimal hypersurfaces}, available at arXiv:1601.01049v2.


\bibitem[A]{A}
W.K. Allard, \emph{On the first variation of a varifold,}
Ann. of Math.(2)
95 (1972)
no. 3, 417--491.

\bibitem[Al]{Al}
F.J.Almgren. \emph{Existence and regularity almost everywhere of solutions to elliptic variational
problems with constraints}, Mem.AMS 165 (1976).

\bibitem[AS]{almgrensimon}
F.J. Almgren and L. Simon. \emph{Existence of embedded solutions of Plateau's problem}.  Ann. Scuola Norm. Sup. Pisa 6 (1979) 447-495.



\bibitem[Ag]{Ag}
S. Agmon, A. Douglis, and L. Nirenberg. \emph{Estimates near the boundary for solutions of elliptic
partial differential equations satisfying general boundary conditions. I}, Comm. Pure Appl. Math. 12
(1959), 623--727.




\bibitem[AN]{AN}
L. Ambrozio and I. Nunes. \emph{A gap theorem for free boundary minimal surfaces in the three-ball}, preprint available at arXiv:1608.05689v1.

\bibitem[BS]{BS}
J. Bokowski and E. Sperner. \emph{Zerlegung konvexer K\"orper durch minimale Trennfl\"achen}. J. Reine
Angew. Math.  311-312 (1979), 80-100.

\bibitem[B]{B}
S. Brendle. \emph{A sharp bound for the area of minimal surfaces in the unit ball}, Geom. Funct. Anal. vol 22 (2012) 621--626.

\bibitem[C]{C}
R. Courant, \emph{The existence of minimal surfaces of given topological stru
cture under prescribed boundary
conditions}
, Acta Math.
72
(1940), 51-98.

\bibitem[CD]{cd}
T. H. Colding and C. De Lellis. \emph{The min-max construction of minimal surfaces}, Surveys in Differential Geometry, vol 8 (2003) 75-107.

\bibitem[CS]{CS}
H.I. Choi and R. Schoen. \emph{The space of minimal embeddings of a surface into a 3-manifold with positive Ricci curvature}. Invent. Math., 81, 1985, 357--394.

\bibitem[DP]{dp}
C. De Lellis and F. Pellandini. \emph{Genus bounds for minimal surfaces arising from the min-max construction} Jour. Reine. Angwdte Math. (2010).

\bibitem[DR]{DR}
C. De Lellis and J. Ramic, \emph{Min-max theory for minimal hypersurfaces with boundary}, preprint available at
arXiv:1611.00926.

\bibitem[D]{D}
B. Devyver. \emph{Index of the critical catenoid}, preprint available at 1609.02315v1.pdf.

\bibitem[FGM]{FGM}
B. Freidin, M. Gulian and P. McGrath, \emph{Free boundary minimal surfaces in the unit ball with low cohomogeneity}, preprint, available at arXiv:1601.07588.

\bibitem[FPZ]{pacard}
A. Folha, F. Pacard, and T. Zolotareva. \emph{Free boundary minimal surfaces in the unit $3$-ball}, preprint, available at abs/1502.06812.

\bibitem[FL]{ML}
A. Fraser and M. Li. \emph{Compactness of the space of embedded minimal surfaces with free boundary in three-manifolds with nonnegative Ricci curvature and convex boundary},  J. Differential Geom. vol. 96, no. 2 (2014), 183--200.

\bibitem[FS]{FS}
A. Fraser and R. Schoen. \emph{Sharp eigenvalue bounds and minimal surfaces in the ball}, , Invent. math., vol 203, Issue 3, pp 823--890.

\bibitem[FS2]{FS2}
A. Fraser and R. Schoen. \emph{Minimal surfaces and eigenvalue problems}, Contemp. Math.
599, Amer. Math. Soc.,
Providence, RI, 105--121, 2013.

\bibitem[FS3]{FS3}
A. Fraser and R. Schoen. \emph{The first Steklov eigenvalue, conformal geometry, and free boundary minimal surfaces}, Adv. Math. 226 (2011), no. 5, 4011--4030.

\bibitem[HM]{HM}
D. Hoffman and W. H. Meeks. \emph{A variational approach to the existence of complete
embedded minimal surfaces}, Duke Math. J. , 57(3) (1988) 877--893.

\bibitem[GJ]{GJ}
M. Gr\"uter and J. Jost, \emph{On embedded minimal disks in convex bodies}, Ann. Inst. H. Poincare Anal. No
Linaire 3 (1986), no. 5, 345--390

\bibitem[GJ2]{GJ2}
M. Gr\"uter and J. Jost. \emph{Allard type regularity results for varifolds with free boundaries},  Ann. Scuola Nor
Sup. Pisa Cl. Sci. (4) 13 (1986), no. 1, 129--169.

\bibitem[I]{I}
T. Ilmanen. \emph{Lectures on Mean Curvature Flow and Related Equations} in Conference on Partial Differential Equations and Applications to Geometry, 21 August - 1 September, 1995, ICTP, Trieste.

\bibitem[KKM]{KKM}
N. Kapouleas, S. Kleene, and N. M{\o}ller, \emph{Mean curvature self-shrinkers of high genus: non-compact examples}, preprint available at abs/1106.5454.

\bibitem[Ke]{ketover}
D. Ketover. \emph{Genus bounds for min-max minimal surfaces}, preprint, available at abs/1312.2666.

\bibitem[Ke2]{ketover2}
D. Ketover. \emph{Equivariant min-max theory}, preprint.

\bibitem[KMN]{KMN}
D. Ketover, F.C. Marques and A. Neves. \emph{Catenoid estimate and its geometric applications}, preprint, available at abs/1601.04514. 


\bibitem[KPS]{kps}
  H. Karcher, U. Pinkall and I. Sterling.
  \emph{New minimal surfaces in $\mathbb{S}^3$}, Jour. Diff. Geom. (1988) no. 2  169-185.
 
\bibitem[Li]{li}
M. Li, \emph{A general existence theorem for  embedded minimal  surfaces  with free boundary}, Comm. Pure
Appl. Math. 68 (2015), no. 2, 286-331.


\bibitem[MN]{mn}
F.C. Marques and A. Neves. \emph{Existence of infinitely many minimal hypersurfaces in positive Ricci curvature.} preprint available at abs/1311.6501.

\bibitem[MNS]{mns}
D. Maximo, I. Nunes and G. Smith. \emph{Free boundary minimal annuli in convex three-manifolds,} J. Differential Geom, to appear.


\bibitem[MSY]{msy}
W. Meeks, L. Simon, and S.T. Yau.
\emph{Embedded minimal surfaces,
exotic spheres, and manifolds with positive Ricci curvature.}
Ann. of Math.
2-116 (1982) 621-659.

\bibitem[MY]{MY}
W. H. Meeks,  III and S. T. Yau. \emph{The classical Plateau problem and the topology of
three-dimensional manifolds.} Topology , 21(4): (1982) 409--442.

\bibitem[Ng]{Ng} X.H. Nguyen, \emph{Construction of complete embedded self-similar surfaces under mean curvature flow, part I}, ProQuest LLC, Ann Arbor, MI, 2006. Thesis (Ph.D.), The University of Wisconsin - Madison. MR2709389.

\bibitem[N]{N}
J. C. C. Nitsche, \emph{Stationary partitioning of convex bodies},
Arch. Rational Mech. Anal.
89
(1985),
no. 1, 1–19.

\bibitem[LZ]{lz}
M. Li and X. Zhou. \emph{Min-max theory for free boundary minimal hypersurfaces I - regularity theory}, preprint available at abs/1611.02612.

\bibitem[LZ2]{LZ2}
M. Li and X. Zhou. \emph{Curvature estimates for stable free boundary minimal hypersurfaces}, preprint available at 1611.02605v2.

\bibitem[Pi]{pitts}
J. Pitts. \emph{Existence and regularity of minimal surfaces on Riemannian manifolds}. volume 27 of
Mathematical Notes. Princeton University Press, Princeton, N.J., 1981.


\bibitem[PR]{PR}
J. Pitts and J.H. Rubinstein. \emph{Applications of minimax to minimal surfaces and
the topology of 3-manifolds},
Miniconference on geometry and partial differential
equations, 2, Canberra, (1986), 137--170; Proc. Centre Math. Anal. Austral. Nat. Univ.

\bibitem[PR2]{PR2}
 J. Pitts and J.H. Rubinstein.
  \emph{Equivariant minimax and minimal surfaces in geometric three-manifolds}, Bull. Amer. Math. Soc. (N.S.)
    vol. 19, no 1 (1988), 303-309.

\bibitem[RV]{RV}
A. Ros and E. Vergasta.
\emph{Stability for hypersurfaces of constant mean curvature with free
boundary}, Geom. Dedicata,
56, (1995), 19--33.

\bibitem[R]{R}
A. Ros.
\emph{The isoperimetric problem}, In Proceedings of the Clay Mathematical Institute MSRI
summer school on Minimal Surfaces, Global theory of minimal surfaces, 175–209, Clay Math.
Proc., 2, Amer. Math. Soc., Providence, RI, 2005.


\bibitem[S]{schoen}
R. Schoen. \emph{Estimates for stable minimal surfaces in three-dimensional
manifolds}.  In Seminar on minimal submanifolds volume 103 of Ann of
Math Studies ; p. 111-126, Princeton University Press, Princeton, NJ 1983

\bibitem[SchS]{schoensimon}
R. Schoen and L. Simon. \emph{Regularity of simply connected surfaces with quasiconformal Gauss map.} In Seminar on Minimal submanifolds, Annals of Math Studies, vol. 103, 127--145, Princeton University Press, Princeton, NJ 1983.

\bibitem[SS]{ss}
F. Smith. \emph{On the existence of embedded minimal 2-spheres in the 3-sphere,
endowed with an arbitrary riemannian metric}. Phd thesis, Supervisor:
Leon Simon, University of Melbourne, 1982.

\bibitem[SZ]{SZ}
G. Smith and D. Zhou. \emph{The Morse index of the critical catenoid}, preprint available at 1609.01485v1.pdf.

\bibitem[T]{T}
H. Tran. \emph{Index characterization for free boundary minimal surfaces}, preprint available at 1609.01651v2.pdf.

\bibitem[W2]{white2}
B. White. \emph{A local regularity theorem for mean curvature flow},  Ann. of Math. (2) 161
(2005), no. 3, 1487--1519. 

\bibitem[W]{white}
B. White. \emph{The space of minimal surfaces for varying Riemannian metrics,} Indiana Math. Journal
40
(1991), no.1, 161--200.



\bibitem[V]{V}
A. Volkmann. \emph{A monotonicity formula for free boundary surfaces with respect to the unit ball}, preprint available at abs/1402.4784.


\end{thebibliography}
\end{document}